\numberwithin{equation}{section}
\newtheorem{theorem}{Theorem}[section]
\newtheorem{lemma}[theorem]{Lemma}
\theoremstyle{definition}
\newtheorem{remark}[theorem]{Remark}
\theoremstyle{remark}
\DeclareMathOperator*{\esssup}{ess \, sup}
\DeclareMathOperator*{\essinf}{ess \, inf}
\newcommand{\R}{{\mathbb R}}
\def\XXint#1#2#3{{\setbox0=\hbox{$#1{#2#3}{\int}$}
     \vcenter{\hbox{$#2#3$}}\kern-.5\wd0}}
\newcommand{\pp}{{p(\cdot)}}
\newcommand{\Pp}{\mathcal{P}}
\newcommand{\Lpp}{L^\pp}
\newcommand{\cpp}{{p'(\cdot)}}
\newcommand{\qq}{{q(\cdot)}}
\newcommand{\tB}{\tilde{B}}
\newcommand{\lp}{{\lambda(\cdot)}}
\begin{document}
\title[The Stein-Weiss inequality in  variable exponent Morrey spaces]
{The Stein-Weiss inequality \\ in  variable exponent Morrey spaces}

\author[D. Cruz-Uribe]{David Cruz-Uribe OFS}

\address{ David Cruz-Uribe, OFS\\
	Department of Mathematics\\
	University of Alabama \\
	Tuscaloosa, AL 35487 \\
	USA}

	\email{dcruzuribe@ua.edu}

\author[A. Ghorbanalizadeh]{Arash Ghorbanalizadeh}

\address{Arash Ghorbanalizadeh\\
	Department of Mathematics\\
 Nazarbayev University \\
	53 Kabanbay Batyr Ave, Astana 010000 \\
	Kazakhstan \\ 
    and Department of Mathematics\\
    Institute for Advanced Studies in Basic Sciences (IASBS) \\
    Zanjan, 45137-66731 \\
    Iran}

    \email{ arash.ghorbanalizadeh@nu.edu.kz; ghorbanalizadeh@iasbs.ac.ir}

\author[D. Suragan]{Durvudkhan Suragan}

\address{Durvudkhan Suragan \\
	Department of Mathematics \\
 Nazarbayev University \\
	53 Kabanbay Batyr Ave, Astana 010000 \\
	Kazakhstan}

\email{durvudkhan.suragan@nu.edu.kz}

\thanks{The first author is partially supported by a Simons Foundation Travel Support for Mathematicians Grant and by NSF grant DMS-2349550. The second two authors were funded by Nazarbayev University under Collaborative Research Program Grant 20122022CRP1601.}

\subjclass[2010]{26D10, 35A23, 39B62, 42B35}

\keywords{variable Lebesgue spaces, Morrey spaces, fractional integral operators, weights, Stein-Weiss inequality, Hardy-Sobolev inequality, fractional Hardy-Rellich inequality, Gagliardo-Nirenberg inequality.}

\date{\today}

\begin{abstract}
In this paper we prove the Stein-Weiss inequality in variable exponent Morrey spaces over a bounded domain. Our work extends earlier results in the variable exponent Lebesgue and Morrey settings, and utilizes new proof techniques applicable to Morrey spaces. We build on the foundational paper \cite{AHS}, which  introduced Morrey spaces of variable exponents. As an application of our main result, we prove  Poincar\'e-type inequalities using the  approach of a recent paper~\cite{CrSu} by the first and third authors.
\end{abstract}

\maketitle{}

\section{Introduction}

In this paper we are interested in the boundedness  of fractional integral operators on various function spaces.  Given $0<\gamma<n$, define the fractional integral  $I_\gamma$ to be the convolution operator 
\[ I_\gamma f(x) = \int_{\R^n} \frac{f(y)}{|x-y|^{n-\gamma}}\,dy.  \]
The boundedness of this operator was  first considered on the Lebesgue spaces by Hardy and Littlewood~\cite{HardyLittlewood}, and  was later extended by other mathematicians, particularly Sobolev~\cite{Sobolev}, to higher dimensions.  The following result is often referred to  as the Hardy-Littlewood-Sobolev theorem:  for $0<\gamma<n$ and $1<p<\frac{n}{\gamma}$, define $q$ by $\frac{1}{p}-\frac{1}{q}=\frac{\gamma}{n}$.  Then 
\[  \|I_\gamma f\|_{L^q(\R^n)} \leq C\|f\|_{L^p(\R^n)}.  \]

Stein and Weiss~\cite{SteinWeiss} later proved a weighted version of this result.  

\begin{theorem} \label{thm:stein-weiss}
Fix $0<\gamma<n$, $1<p\leq q <\infty$, and constants $a$ and $b$ such that
\[ -\frac{n}{q} < a \leq b < \frac{n}{p'}. \]
Suppose further that 
\[ \frac{1}{p} - \frac{1}{q} = \frac{\gamma}{n} + \frac{a-b}{n}.  \]
Then there exists a constant $C>0$ such that for every function $f$,
\[ \| |\cdot|^a I_\gamma f\|_{L^q(\R^n)} \leq C\| |\cdot|^b f\|_{L^p(\R^n)}.
\]
\end{theorem}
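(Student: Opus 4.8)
The plan is to reduce the weighted inequality to an unweighted bound for a (non‑convolution) kernel operator and then to split that kernel according to the relative size of $|x|$ and $|y|$. It suffices to treat $f\ge 0$, and after the substitution $g(y)=|y|^b f(y)$ the assertion becomes the $L^p(\R^n)\to L^q(\R^n)$ boundedness of
\[ Tg(x)=\int_{\R^n}\frac{|x|^{a}\,|y|^{-b}}{|x-y|^{n-\gamma}}\,g(y)\,dy . \]
The kernel $K(x,y)=|x|^{a}|y|^{-b}|x-y|^{\gamma-n}$ is homogeneous of degree $\gamma-n+a-b$, and a one‑line dilation check shows that the Stein--Weiss relation $\tfrac1p-\tfrac1q=\tfrac{\gamma}{n}+\tfrac{a-b}{n}$ is exactly the scaling identity that any homogeneous kernel mapping $L^p$ to $L^q$ must satisfy; so the hypotheses are at least consistent with the conclusion. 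I would then write $T=T_1+T_2+T_3$ according to the decomposition of the $y$-space $\R^n$ into $\{|y|<|x|/2\}$, $\{|x|/2\le|y|\le 2|x|\}$, and $\{|y|>2|x|\}$, and estimate each summand separately.

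On $\{|y|<|x|/2\}$ one has $|x-y|\sim|x|$, so $T_1g(x)\lesssim|x|^{a+\gamma-n}\int_{|y|<|x|/2}|y|^{-b}g(y)\,dy$; on $\{|y|>2|x|\}$ one has $|x-y|\sim|y|$, so $T_3g(x)\lesssim|x|^{a}\int_{|y|>2|x|}|y|^{\gamma-n-b}g(y)\,dy$. Both $T_1$ and $T_3$ are power‑weighted Hardy (respectively dual Hardy) operators; passing to polar coordinates reduces their $L^p\to L^q$ boundedness, for $1<p\le q<\infty$, to the classical one‑dimensional weighted Hardy inequality (the $p\le q$ case, due to Bradley). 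For power weights the Muckenhoupt‑type $A_{p,q}$ quantity is constant in the relevant variable — precisely because of the scaling relation — and it is finite exactly when the weights are locally integrable against $L^p$ and $L^{p'}$ functions at the origin and at infinity: this is where $b<n/p'$ (for $T_1$) and $a>-n/q$ (for $T_3$) enter.

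The piece $T_2$, supported where $|x|\sim|y|$, is the heart of the matter. There $|x|^{a}|y|^{-b}\sim|x|^{a-b}$, so on each dyadic annulus $A_k=\{2^{k}\le|x|<2^{k+1}\}$ one has $\chi_{A_k}T_2g\lesssim 2^{k(a-b)}\,I_\gamma\bigl(g\chi_{\widetilde A_k}\bigr)$ with $\widetilde A_k=\{2^{k-1}\le|y|<2^{k+2}\}$. If $a=b$ this is literally the Hardy--Littlewood--Sobolev theorem applied annulus by annulus. If $a<b$ then $\tfrac1p-\tfrac1q<\tfrac{\gamma}{n}$ is strictly subcritical; here I would exploit the boundedness of each annulus, using that $I_\gamma\colon L^p(B)\to L^q(B)$ holds for any ball $B$ whenever $\tfrac1p-\tfrac1q\le\tfrac\gamma n$ and $q<\infty$, and that a dilation $x\mapsto Rx$ shows its operator norm over $B(0,R)$ equals $R^{\,\gamma+n/q-n/p}=R^{\,b-a}$ times a dimensional constant. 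Taking $R\sim 2^k$ produces exactly the factor $2^{k(b-a)}$ needed to absorb the $2^{k(a-b)}$ in front, leaving $\|T_2g\|_{L^q(A_k)}\lesssim\|g\|_{L^p(\widetilde A_k)}$ for each $k$. Summing in $k$, using $q\ge p$ to pass from $\ell^p$ to $\ell^q$ together with the bounded overlap of the $\widetilde A_k$, gives $\|T_2 g\|_{L^q(\R^n)}\lesssim\|g\|_{L^p(\R^n)}$, and adding the three estimates completes the argument.

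I expect the genuine difficulty to be this treatment of $T_2$ when $a<b$ — identifying the radius dependence of the Hardy--Littlewood--Sobolev constant on a ball and checking that it cancels the annular weight — since for $a=b$ the diagonal region is just classical Sobolev embedding and the off‑diagonal pieces are textbook Hardy inequalities. An alternative route would be to prove the two extreme cases directly, namely $a=b$ (pure Hardy--Littlewood--Sobolev plus the two Hardy pieces) and the diagonal exponent $p=q$ (where $T$ is a homogeneous‑kernel operator on $L^p$ and the classical integrability/Schur criterion applies at once, the relevant integral converging precisely under $b<n/p'$ and $a>-n/q$), and then to interpolate with change of measure to reach the general statement. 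The three‑region decomposition above is, however, more self‑contained, and — since every one of its steps is an estimate localized to a dyadic annulus or ball — it is essentially the argument that adapts to the variable‑exponent Morrey setting studied in this paper.
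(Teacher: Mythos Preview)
The paper does not give its own proof of Theorem~\ref{thm:stein-weiss}; it is quoted as the classical result of Stein and Weiss~\cite{SteinWeiss} and serves only as background for the paper's main result, Theorem~\ref{thm:weight}. Your sketch is a correct and standard approach to the classical inequality.

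That said, your three-region decomposition is precisely the architecture the paper uses to prove Theorem~\ref{thm:weight}, the variable exponent Morrey generalization: the integrals $J_1$, $J_2$, $J_3$ there correspond exactly to your $T_1$, $T_2$, $T_3$ (with the origin replaced by a fixed point $x_0$). For the off-diagonal pieces $J_1$ and $J_3$ the paper argues via explicit dyadic decomposition, H\"older's inequality, and the log-H\"older estimates, rather than by invoking a named one-dimensional Hardy inequality; for power weights in the constant-exponent case these are equivalent computations. For the annular piece $J_2$ the paper, like you, separates $a<b$ from $a=b$: when $a<b$ it sets $\sigma=\gamma-(b-a)\ge 0$, bounds $J_2(x)\lesssim|x-x_0|^{-a}M_\sigma(|\cdot-x_0|^b f)(x)$ via a dyadic cover of the annulus, and then applies the known Morrey bound for the fractional maximal operator $M_\sigma$; when $a=b$ it uses $I_\gamma$ directly. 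This is the Morrey-space substitute for your ``HLS on a ball with radius-dependent constant'' step, and it avoids tracking the scaling of the operator norm by absorbing the geometry into the maximal function. So you anticipated the paper's own strategy quite accurately, even though the paper applies it only to the generalized setting and not to the classical theorem you were asked about.
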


\medskip

Bounds for the fractional integral operator on the classical Morrey spaces have been studied by a number of authors:  see~\cite{Adams, Olsen, Petre, Naka, NS22}. 
Recall that if  $1 \le p < \infty$ and $0\le \lambda \le n$, the Morrey space ${\mathcal M}_{p}^{\lambda}(\mathbb{R}^n)$ is the space of all $f\in L^p_{loc}(\R^n)$ such that 
$$
\|f\|_{{\mathcal M}_{p}^{\lambda}({\mathbb R}^n)}
:=
\sup _{x \in {\mathbb R}^n, ~ r>0} \left(  r^{-\lambda} \int_{B(x,r)} |f(y)|^{p} dy\right)^{\frac{1}{p}}< \infty, 
$$
where $B(x, r)= \{ y\in \R^n : |x-y|<r\}$.  We have the following result due to Spanne (see Peetre~\cite[Theorem~5.2]{Petre}).  
 
\begin{theorem}\label{spa}
Fix  $0 < \gamma < n$, $1 < p < \frac{n}{\gamma}$, and  define $q$ by $\frac{1}{p} - \frac{1}{q} =  \frac{\gamma}{n}$.  Suppose further that we have $0 < \lambda < \mu< n$ with
\[ \quad \frac{\lambda}{ p} = \frac{\mu}{ q}.
\]
Then there exists a constant $C>0$ such that for every function $f$,
\[
\| I_{\gamma} f \|_{M^\mu_q(\mathbb{R}^n)} \leq C \| f \|_{M^\lambda_p(\mathbb{R}^n)}.
\]
\end{theorem}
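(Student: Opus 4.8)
The plan is to transplant Hedberg's classical pointwise-estimate trick to the Morrey setting, thereby reducing the boundedness of $I_\gamma$ to the well-known boundedness of the Hardy--Littlewood maximal operator $M$ on $M^\lambda_p(\R^n)$, which holds for $1<p<\infty$ and $0\le\lambda<n$.

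The first step is to establish the pointwise bound
\[
|I_\gamma f(x)|\le C\,\|f\|_{M^\lambda_p}^{\,\gamma p/(n-\lambda)}\,\bigl(Mf(x)\bigr)^{1-\gamma p/(n-\lambda)},\qquad x\in\R^n,
\]
for $f\in M^\lambda_p(\R^n)$. Fix $\delta>0$ and split $I_\gamma f(x)$ into the integrals over $|x-y|<\delta$ and over $|x-y|\ge\delta$. The inner part is handled by the standard dyadic-shell argument: grouping over the shells $2^{-j-1}\delta\le|x-y|<2^{-j}\delta$ and estimating each piece by an average of $|f|$ over $B(x,2^{-j}\delta)$ gives a bound $C\delta^\gamma Mf(x)$, the geometric series converging since $\gamma>0$. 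For the outer part, grouping over the shells $2^{j}\delta\le|x-y|<2^{j+1}\delta$ with $j\ge0$, applying H\"older's inequality on each shell, and using $(\int_{B(x,r)}|f|^p\,dy)^{1/p}\le r^{\lambda/p}\|f\|_{M^\lambda_p}$, one bounds the $j$-th term by $c\,(2^j\delta)^{\gamma-(n-\lambda)/p}\|f\|_{M^\lambda_p}$; the resulting series converges precisely when $\gamma-(n-\lambda)/p<0$, i.e. $\lambda<n-\gamma p$, which one checks is equivalent to the hypothesis $\mu<n$. (The same estimate shows $I_\gamma f(x)$ is finite for a.e.\ $x$.) Combining the two pieces yields $|I_\gamma f(x)|\le C\bigl(\delta^\gamma Mf(x)+\delta^{\gamma-(n-\lambda)/p}\|f\|_{M^\lambda_p}\bigr)$, and optimizing over $\delta>0$ --- the cases $Mf(x)\in\{0,\infty\}$ being trivial --- produces the displayed inequality, in which the exponent $\alpha:=\gamma p/(n-\lambda)$ lies in $(0,1)$, again because $\mu<n$.

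For the second step, set $\theta:=(1-\alpha)q=\tfrac{(n-\lambda-\gamma p)q}{n-\lambda}$ and note that $\lambda>0$ forces $\theta<p$. Raising the pointwise bound to the power $q$ and integrating over an arbitrary ball $B(x_0,R)$,
\begin{align*}
R^{-\mu}\int_{B(x_0,R)}|I_\gamma f|^q\,dx &\le C\,\|f\|_{M^\lambda_p}^{\alpha q}\,R^{-\mu}\int_{B(x_0,R)}(Mf)^\theta\,dx\\
&\le C\,\|f\|_{M^\lambda_p}^{\alpha q}\,R^{\,\kappa}\,\|Mf\|_{M^\lambda_p}^{\theta},
\end{align*}
where $\kappa:=-\mu+n-(n-\lambda)\theta/p$, and the last inequality combines H\"older's inequality with exponent $p/\theta$ and the estimate $\int_{B(x_0,R)}(Mf)^p\,dx\le R^\lambda\|Mf\|_{M^\lambda_p}^p$. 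A short computation using $\tfrac1p-\tfrac1q=\tfrac\gamma n$ and $\tfrac\lambda p=\tfrac\mu q$ shows $\kappa=0$. Taking the supremum over all balls $B(x_0,R)$, invoking the boundedness of $M$ on $M^\lambda_p$, and using the identity $\alpha+\theta/q=1$, we conclude $\|I_\gamma f\|_{M^\mu_q}\le C\|f\|_{M^\lambda_p}$.

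The one point that genuinely requires care is the tail estimate in the first step: one must recognize that the single hypothesis $\mu<n$ is exactly what makes the geometric series converge and what forces the Hedberg exponent $\alpha$ into $(0,1)$. Everything else is bookkeeping --- checking $\kappa=0$ and $\alpha+\theta/q=1$ --- and both of these identities reduce directly to the scaling relations $\tfrac1p-\tfrac1q=\tfrac\gamma n$ and $\tfrac\lambda p=\tfrac\mu q$.
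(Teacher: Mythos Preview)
The paper does not actually prove Theorem~\ref{spa}; it is quoted in the introduction as a classical result attributed to Spanne (via Peetre~\cite[Theorem~5.2]{Petre}) and serves only as historical background for the paper's own main result, Theorem~\ref{thm:weight}. So there is no ``paper's proof'' to compare against.

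That said, your argument is correct. The Hedberg pointwise estimate, the verification that $\mu<n$ is equivalent to $\lambda<n-\gamma p$ (hence to convergence of the tail series and to $\alpha\in(0,1)$), the H\"older step with exponent $p/\theta$, and the algebraic checks $\kappa=0$ and $\alpha q+\theta=q$ all go through as you describe. It is worth noting, though, that the pointwise bound you derive,
\[
|I_\gamma f(x)|\le C\,\|f\|_{M^\lambda_p}^{\gamma p/(n-\lambda)}\bigl(Mf(x)\bigr)^{1-\gamma p/(n-\lambda)},
\]
is exactly the estimate that yields Adams' stronger Theorem~\ref{Ada}: with Adams' exponent $q_A$ defined by $\tfrac1p-\tfrac1{q_A}=\tfrac{\gamma}{n-\lambda}$ one has $(1-\alpha)q_A=p$ on the nose, so no H\"older step is needed and the target Morrey exponent stays at $\lambda$. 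Your route therefore proves more than Spanne's theorem requires; the classical proof of Spanne's version (as in Peetre) proceeds by a direct annular decomposition of $I_\gamma f$ and does not pass through the maximal function at all. Either way the result follows, but you have in effect reproved Adams and then thrown away information by applying H\"older to land in $M^\mu_q$ rather than $M^\lambda_{q_A}$.
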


In the range $1<p<\frac{n-\lambda}{\gamma}$
Adams~\cite{Adams} proved a sharper result.

\begin{theorem}\label{Ada}
Fix  $0 < \gamma < n$, $0\leq \lambda \leq n$, $1 < p < \frac{n-\lambda}{\gamma}$, and  define $q$ by $\frac{1}{p} - \frac{1}{q} =  \frac{\gamma}{n-\lambda}$.  
Then there exists a constant $C>0$ such that for every function $f$,
\[
\| I_{\gamma} f \|_{ M_{q}^{\lambda}(\mathbb{R}^n)} \leq C \| f \|_{M_{p}^{\lambda}(\mathbb{R}^n)}.
\]
\end{theorem}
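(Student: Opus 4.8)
The plan is to reprove Adams' theorem via the classical Hedberg--Adams pointwise argument, deriving a bound for $I_\gamma f$ in terms of the Hardy--Littlewood maximal function $Mf$ and the Morrey norm, and then integrating. Fix $f\in M_p^\lambda(\R^n)$, a point $x\in\R^n$, and a scale $\delta>0$, and split
\[
I_\gamma f(x) = \int_{|x-y|<\delta}\frac{f(y)}{|x-y|^{n-\gamma}}\,dy + \int_{|x-y|\ge\delta}\frac{f(y)}{|x-y|^{n-\gamma}}\,dy =: A_\delta(x) + B_\delta(x).
\]
For $A_\delta$ I would decompose $\{|x-y|<\delta\}$ into the dyadic annuli $\delta 2^{-j-1}\le|x-y|<\delta 2^{-j}$, $j\ge 0$; on each one $|x-y|^{-(n-\gamma)}\approx(\delta 2^{-j})^{-(n-\gamma)}$, and the integral of $|f|$ over the enclosing ball is at most $(\delta 2^{-j})^n\,Mf(x)$, so summing the geometric series (convergent since $\gamma>0$) gives $|A_\delta(x)|\lesssim \delta^\gamma Mf(x)$. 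For $B_\delta$ I would use the outward dyadic decomposition $\delta 2^j\le|x-y|<\delta 2^{j+1}$, $j\ge 0$, together with Hölder's inequality and the defining estimate $\int_{B(x,r)}|f|\lesssim r^{n/p'+\lambda/p}\|f\|_{M_p^\lambda}$; this produces a series whose $j$-th term carries a factor $(2^j)^{\gamma-(n-\lambda)/p}$, and the hypothesis $1<p<\tfrac{n-\lambda}{\gamma}$ is exactly what makes this exponent negative, so the series converges and $|B_\delta(x)|\lesssim \delta^{\gamma-(n-\lambda)/p}\|f\|_{M_p^\lambda}$.

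Adding the two estimates and optimizing in $\delta$ (choosing $\delta$ so that the two terms balance, with the degenerate cases $Mf(x)=0$ or $Mf(x)=\infty$ handled trivially) yields the pointwise bound
\[
|I_\gamma f(x)| \lesssim \big(Mf(x)\big)^{p/q}\,\|f\|_{M_p^\lambda}^{\,1-p/q},
\]
where the exponents fall out precisely because $\tfrac1p-\tfrac1q=\tfrac{\gamma}{n-\lambda}$. Raising this to the power $q$, integrating over an arbitrary ball $B(x_0,r)$, and invoking the boundedness of $M$ on $M_p^\lambda$ — the well-known Chiarenza--Frasca fact, which follows by splitting $f$ into its restriction to $2B(x_0,r)$ and the remainder, applying the $L^p$ boundedness of $M$ to the first piece and using $\lambda<n$ to estimate the second — gives $r^{-\lambda}\int_{B(x_0,r)}|I_\gamma f|^q\lesssim\|f\|_{M_p^\lambda}^q$. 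Taking the supremum over $x_0$ and $r$ completes the proof.

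The dyadic estimates and the Morrey boundedness of $M$ are routine; the genuinely load-bearing points are the optimization step and the bookkeeping that the resulting exponents coincide with the definition of $q$, together with keeping track of where the sharp range $1<p<\tfrac{n-\lambda}{\gamma}$ is used — it is needed for the convergence of the outer dyadic series and, equivalently, to guarantee $q<\infty$. The \emph{main conceptual obstacle}, were one not already in possession of Adams' trick, is realizing that one must interpolate pointwise between $Mf$ and the Morrey norm rather than iterate Hölder's inequality globally, the latter giving only the weaker Spanne-type exponent of Theorem~\ref{spa}.
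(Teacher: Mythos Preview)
Your argument is the classical Hedberg--Adams pointwise trick and is correct as written: the near-part estimate, the far-part dyadic series with exponent $\gamma-(n-\lambda)/p<0$, the optimization in $\delta$ producing $|I_\gamma f|\lesssim (Mf)^{p/q}\|f\|_{M_p^\lambda}^{1-p/q}$, and the final appeal to Chiarenza--Frasca are all sound, and the exponent bookkeeping is right.

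There is, however, nothing to compare against in the paper itself. Theorem~\ref{Ada} is stated in the introduction as background and is attributed to Adams~\cite{Adams}; the paper gives no proof of it and does not claim to. The only theorem the authors actually prove is the variable-exponent Stein--Weiss inequality (Theorem~\ref{thm:weight}), whose proof in Section~\ref{section:main-proof} uses a different three-piece decomposition centred at the singular point $x_0$ rather than the Hedberg splitting at scale $\delta$. So your write-up is not a reconstruction of anything in the paper; it is simply (a correct sketch of) Adams' original argument, which is presumably what the citation~\cite{Adams} points to.
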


 Since \( \| f \|_{M^\mu_q(\mathbb{R}^n)} \lesssim \| f \|_{M_{q}^{\lambda}(\mathbb{R}^n)} \), Theorem \ref{Ada} improves Theorem \ref{spa} when \( 1 < p < \frac{n - \lambda}{\gamma} \).

A version of Theorem~\ref{thm:stein-weiss} in the Morrey spaces was recently proved by Kassymov, Ragusa,  Ruzhansky, and the third author~\cite{KRRS}.

\begin{theorem} \label{thm:KRRS}
    Fix  $0 < \gamma < n$, and $a,\,b$ such that 
$0 \leq b-a \leq \gamma$, and $p$ such that $1<p<\frac{n}{\gamma-(b-a)}$, and define $q$ by
$\frac{1}{p}-\frac{1}{q} = \frac{\gamma-(b-a)}{n-\lambda}$.  Suppose further that
\[ - \frac{n-\lambda}{q} < a \leq b < \frac{n}{p'}, \quad 0<\lambda < n- (\gamma-(b-a))p. \]
Then there exists a constant $C>0$ such that for every function $f$,
\[ \| |\cdot|^a I_\gamma f \|_{M^\lambda_q(\R^n)} \leq C\||\cdot|^b f  \|_{M^\lambda_p(\R^n)}. \]
\end{theorem}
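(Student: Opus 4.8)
The plan is to combine the Adams--Hedberg method for fractional integrals on Morrey spaces with the Stein--Weiss decomposition of the kernel according to the size of $|y|$ relative to $|x|$, reducing the near--diagonal part to Adams' theorem (Theorem~\ref{Ada}) for a fractional integral of the \emph{lowered} order $\gamma':=\gamma-(b-a)$. Since $|I_\gamma f|\le I_\gamma|f|$ and $|\cdot|^b$ is radial we may assume $f\ge0$; set $g:=|\cdot|^b f$, $\Lambda:=\|g\|_{M^\lambda_p}$ and $c:=b-a\in[0,\gamma]$, so $\gamma'=\gamma-c\in[0,\gamma]$ and the scaling relation reads $\frac1p-\frac1q=\frac{\gamma'}{n-\lambda}$. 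Fixing a ball $B=B(z,R)$ we must bound $R^{-\lambda}\int_B|x|^{aq}\bigl(I_\gamma f(x)\bigr)^q\,dx$ by $C\Lambda^q$ uniformly, and it is convenient to split into the case $|z|\ge4R$ (so $|x|\sim|z|$ on $B$) and the case $|z|<4R$ (so $B\subset B(0,5R)$, and it suffices to bound $\rho^{-\lambda}\int_{B(0,\rho)}|x|^{aq}(I_\gamma f)^q\,dx$ for all $\rho>0$). The two recurring Morrey estimates are $\int_{|y|<\rho}|y|^{-s}g\lesssim\rho^{\,n-s-(n-\lambda)/p}\Lambda$ when $s<\frac{n}{p'}+\frac{\lambda}{p}$, and $\int_{|y|>\rho}|y|^{-s}g\lesssim\rho^{\,n-s-(n-\lambda)/p}\Lambda$ when $s>\frac{n}{p'}+\frac{\lambda}{p}$; the first is applied with $s=b$ (whence the hypothesis $b<\frac{n}{p'}$, together with $\lambda>0$) and the second with $s=n-\gamma+b$, whose admissibility condition $\gamma-b-(n-\lambda)/p<0$ is exactly the hypothesis $a>-\frac{n-\lambda}{q}$ (which is also what makes $I_\gamma f$ finite).

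The crux is the near--diagonal region $|y|\sim|x|$, where the contribution to $|x|^aI_\gamma f(x)$ is comparable to $|x|^{-c}\int_{|x-y|\lesssim|x|}\frac{g(y)}{|x-y|^{n-\gamma}}\,dy$. Assuming $\gamma'>0$, one uses the elementary pointwise bound
\[ |x|^{-c}\,|x-y|^{-(n-\gamma)}\ \le\ |x-y|^{-c}\,|x-y|^{-(n-\gamma)}\ =\ |x-y|^{-(n-\gamma')}\qquad(|x-y|\le|x|) \]
to dominate this region by $I_{\gamma'}(g\chi_{B^{*}})(x)$, where $B^{*}$ is a fixed dilate of $B$ (resp. of $B(0,\rho)$). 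Now Adams' theorem applies to $I_{\gamma'}$: its hypotheses $1<p<\frac{n-\lambda}{\gamma'}$ and $\frac1p-\frac1q=\frac{\gamma'}{n-\lambda}$ are nothing other than the hypothesis $\lambda<n-(\gamma-(b-a))p$ and the stated scaling relation. Hence $\|I_{\gamma'}(g\chi_{B^{*}})\|_{M^\lambda_q}\lesssim\|g\|_{M^\lambda_p}=\Lambda$, and since $\|h\|_{L^q(B)}\le R^{\lambda/q}\|h\|_{M^\lambda_q}$ the near--diagonal contribution to $R^{-\lambda}\int_B(\cdot)^q$ is $\lesssim\Lambda^q$. (When $|z|\ge4R$ one first writes $|x|^a\sim|z|^a$ and peels off the part with $|x-y|\ge R$, which a dyadic sum together with the scaling relation bounds by $C\Lambda^q$; the part with $|x-y|<R$ is treated by the displayed inequality and Adams' theorem exactly as above.)

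The remaining pieces are routine. On $\{|y|\ll|x|\}$ one has $|x-y|\sim|x|$, and the first Morrey estimate gives $|x|^a[\text{piece}]\lesssim|x|^{a+\gamma-b-(n-\lambda)/p}\Lambda=|x|^{-(n-\lambda)/q}\Lambda$; on $\{|y|\gg|x|\}$ one has $|x-y|\sim|y|$, and the second Morrey estimate likewise yields $|x|^a[\text{piece}]\lesssim|x|^{-(n-\lambda)/q}\Lambda$. Because $\lambda>0$, $\int_{B(0,R)}|x|^{-(n-\lambda)}\,dx\lesssim R^\lambda$, so these contribute $\lesssim\Lambda^q$ to $R^{-\lambda}\int_B(\cdot)^q$ (for $|z|\ge4R$ one gets instead a harmless factor $(R/|z|)^{n-\lambda}$). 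The tail $I_\gamma(f\chi_{(2B)^{c}})$ is essentially constant on $B$, of size $\lesssim R^{\,\gamma-b-(n-\lambda)/p}\Lambda$, and $R^{-\lambda}\bigl(R^{\gamma-b-(n-\lambda)/p}\bigr)^q\int_B|x|^{aq}\,dx\lesssim\Lambda^q$ by the scaling relation, using $\int_B|x|^{aq}\,dx\lesssim R^{n+aq}$ (valid since $a>-\frac nq$). Finally the degenerate case $\gamma'=0$ (i.e. $b-a=\gamma$, which forces $p=q$) is handled separately: there $|x-y|^{-(n-\gamma')}$ fails to be locally integrable, so in the near--diagonal region one instead interpolates à la Hedberg against $Mg(x)$ and invokes the boundedness of the Hardy--Littlewood maximal operator on $M^\lambda_p$ together with H\"older's inequality, the weight $|x|^{-\gamma}$ being integrable over a ball to the exponent $\frac{n-\lambda}{\gamma}<\frac n\gamma$ precisely because $\lambda>0$.

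I expect the main obstacle to be the near--diagonal region: one must recognise that the factor $|x|^{a-b}$ is exactly what lowers the order of the fractional integral to $\gamma-(b-a)$ there, that this reduces the problem to Adams' theorem, and that the two structural hypotheses of the theorem are just Adams' hypotheses for $I_{\gamma-(b-a)}$ --- and then get the localisations (which dilate of $B$, which truncation of $|x-y|$) and the bookkeeping correct across the cases $|z|\ge4R$ and $|z|<4R$. By contrast the other regions only demand that the dyadic sums be organised so that the lower bound $a>-\frac{n-\lambda}{q}$ controls the parts with $|y|\gg|x|$, the upper bound $b<\frac n{p'}$ controls the parts with $|y|\ll|x|$, and $\lambda>0$ places the emerging power weight $|x|^{-(n-\lambda)/q}$ in $M^\lambda_q$.
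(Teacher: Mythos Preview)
The paper does not prove this theorem --- it is quoted from \cite{KRRS} --- but the paper's proof of the variable-exponent generalisation, Theorem~\ref{thm:weight}, is explicitly modelled on the argument in \cite{KRRS}, so that is the natural comparison.

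Your strategy matches: split $I_\gamma f(x)$ into the three regions $|y|\ll|x|$, $|y|\sim|x|$, $|y|\gg|x|$; control the inner and outer shells by dyadic sums and H\"older (using $b<n/p'$ and $a>-(n-\lambda)/q$ respectively), reducing both to the pointwise bound $|x|^{-(n-\lambda)/q}$ and the integrability of $|x|^{-(n-\lambda)}$ over balls; and on the middle shell absorb the factor $|x|^{a-b}$ into the kernel to lower the order by $b-a$ and invoke an Adams-type result. Two differences are worth noting. First, for the middle shell the paper dyadically decomposes in $|x-y|$ and bounds by the fractional \emph{maximal} operator $M_\sigma$, $\sigma=\gamma-(b-a)$, rather than by $I_{\gamma'}$; since $M_0$ is just the Hardy--Littlewood maximal operator, this makes your separate Hedberg argument for $\gamma'=0$ unnecessary (though in exchange the paper's dyadic sum requires $b>a$ strictly, and it must split off the case $b=a$ and use $I_\gamma$ there). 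Second, the paper obtains \emph{pointwise} bounds $|x|^aJ_i(x)\lesssim|x|^{-(n-\lambda)/q}$ or $|x|^aJ_2(x)\lesssim M_\sigma g(x)$ that integrate uniformly over every ball $B(z,r)$, so it never case-splits on the position of the ball ($|z|\gtrless4R$) and never localises $g$ to a dilate $B^*$. Your version with those extra reductions is correct, but the pointwise route is shorter.
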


\medskip

The boundedness of fractional integrals has also been considered on the variable Lebesgue spaces $L^\pp(\R^n)$.  These are a generalization of the classical $L^p$ spaces, replacing the constant exponent $p$ by an exponent function $\pp : \R^n \rightarrow [1,\infty]$.  For brevity, we defer precise definitions to Section~\ref{section:prelim} below.   It was shown in~\cite{MR2414490} that if $1<p_-\leq p_+<n/\gamma$, and if $\pp$ is log-H\"older continuous, then for $\qq$ defined by 
\[ \frac{1}{p(x)} - \frac{1}{q(x)} = \frac{\gamma}{n}, \]

the Hardy-Sobolev inequality holds with variable exponents:
\[ \|I_\gamma f\|_{L^\qq(\R^n)} \leq C\|f\|_{L^\pp(\R^n)}. \]
For earlier results see~\cite{MR2414490,Cruz} and the references they
contain.

Samko~\cite{Samko} proved a version of the Stein-Weiss inequality in
the variable Lebesgue spaces on a bounded domain.  Here we state a somewhat simpler version of his result.

\begin{theorem}\label{thm:Samko}
Given $0<\gamma<n$, and a bounded open set  \( \Omega \subset \mathbb{R}^n \), fix \( x_0 \in \overline{\Omega} \). Suppose that \( \pp \in \Pp(\Omega) \) is such that $1<p_-\leq p_+< \frac{n}{\gamma}$ and $\pp \in LH_0(\Omega)$. 
Define $\qq$ by $\frac{1}{q(x)} = \frac{1}{p(x)} - \frac{\gamma}{n}$.  Fix constants $\mu$ and $\nu$ such that
\[ \gamma p(x_0) - n < \nu < n \big(p(x_0) - 1\big), \quad \mu = \frac{q(x_0)}{p(x_0)} \nu. \]
Then there exists a constant $C>0$ such that for every function $f$,
\[
\| |\cdot-x_0|^{\frac{\mu}{\qq}}I_{\gamma} f \|_{L^{q(\cdot)}(\Omega)} 
\le C \||x - x_0|^{\frac{\nu}{\pp}} f \|_{L^{\pp}(\Omega)}.
\]

\end{theorem}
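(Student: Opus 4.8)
The plan is to reduce, via the local log-H\"older continuity, to a weighted inequality with \emph{constant} weight exponents, and then to split the weighted fractional integral into a Hardy operator, its conjugate, and a local piece controlled by the unweighted Riesz potential.

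First, since $1<p_-\le p_+<n/\gamma$, the relation $\tfrac1{q(x)}=\tfrac1{p(x)}-\tfrac{\gamma}{n}$ forces $1<q_-\le q_+<\infty$ and gives $\tfrac1{\qq}\in LH_0(\Omega)$ with the same modulus as $\tfrac1{\pp}$; hence $\mu/\qq$ and $\nu/\pp$ lie in $LH_0(\Omega)$, and a standard computation using the boundedness of $\Omega$ shows that
\[
|x-x_0|^{\mu/q(x)-\mu/q(x_0)}\approx 1
\quad\text{and}\quad
|x-x_0|^{\nu/p(x)-\nu/p(x_0)}\approx 1
\]
uniformly for $x\in\Omega$. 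Since $\mu=\tfrac{q(x_0)}{p(x_0)}\nu$, the constants $\mu/q(x_0)$ and $\nu/p(x_0)$ are equal; call their common value $\beta$. By the monotonicity and positive homogeneity of the variable-exponent norm, it then suffices to prove
\[
\big\|\,|\cdot-x_0|^{\beta}\,I_\gamma f\,\big\|_{L^{\qq}(\Omega)}\le C\,\big\|\,|\cdot-x_0|^{\beta}\,f\,\big\|_{L^{\pp}(\Omega)},
\]
and the two hypotheses on $\nu$ translate exactly into the classical Stein--Weiss restriction $-n/q(x_0)<\beta<n/p'(x_0)$. Putting $g=|\cdot-x_0|^{\beta}f$, which we may take nonnegative, this is the $L^{\pp}(\Omega)\to L^{\qq}(\Omega)$ boundedness of the positive operator
\[
Tg(x)=\int_\Omega K(x,y)\,g(y)\,dy,\qquad K(x,y)=|x-x_0|^{\beta}\,|y-x_0|^{-\beta}\,|x-y|^{\gamma-n};
\]
the bound $\beta<n/p'(x_0)$ guarantees $|\cdot-x_0|^{-\beta}\in L^{\cpp}_{\mathrm{loc}}$, so $T$ is well defined.

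Next I would split $\Omega\times\Omega$ according to the relative sizes of $|x-x_0|$ and $|y-x_0|$. On $E_1=\{|y-x_0|<\tfrac12|x-x_0|\}$ one has $|x-y|\approx|x-x_0|$, so the part of $T$ supported there is dominated by the weighted Hardy operator
\[
Hg(x)=|x-x_0|^{\beta+\gamma-n}\int_{B(x_0,|x-x_0|)}|y-x_0|^{-\beta}\,g(y)\,dy;
\]
on $E_2=\{|x-x_0|<\tfrac12|y-x_0|\}$ one has $|x-y|\approx|y-x_0|$, giving the conjugate operator $H^{*}g(x)=|x-x_0|^{\beta}\int_{\Omega\setminus B(x_0,2|x-x_0|)}|y-x_0|^{\gamma-n-\beta}\,g(y)\,dy$; and on the remaining set $|x-x_0|\approx|y-x_0|$, so the weight factor is $\approx 1$ and that part of $Tg$ is bounded pointwise by $I_\gamma(g\chi_\Omega)(x)$. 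The local piece will then be controlled by the variable-exponent Hardy--Sobolev inequality of \cite{MR2414490}, after extending $\pp$ to an exponent in $LH(\R^n)$ with the same bounds (harmless since $\Omega$ is bounded). For $H$ and $H^{*}$, on the region where $|x-x_0|$ is comparable to $\operatorname{diam}\Omega$ the kernels are bounded and the estimate follows from H\"older's inequality and $\beta<n/p'(x_0)$; near $x_0$ I would pass to polar coordinates and use the local log-H\"older continuity to compare the exponent with $p(x_0)$, reducing matters to the sharp one-dimensional Hardy inequalities, whose admissible range of powers is precisely $-n/q(x_0)<\beta<n/p'(x_0)$. Summing the three contributions gives the theorem.

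The step I expect to be the main obstacle is the treatment of the two Hardy operators: unlike for the local piece, one cannot simply ``freeze'' the exponent in the target norm $L^{\qq}$ near $x_0$, so extracting the \emph{sharp} range for $\beta$ requires a careful modular argument (or the invocation of a weighted Hardy inequality in the variable Lebesgue scale). An alternative route that avoids this is to prove the constant-exponent inequality for \emph{all} power weights $|\cdot-x_0|^{\beta}$ in the Muckenhoupt class $A_{p_0,q_0}$ — which is exactly Theorem~\ref{thm:stein-weiss} with $a=b$ — and then apply off-diagonal Rubio de Francia extrapolation into the variable Lebesgue scale; there the delicate point is to carry the power weight through the extrapolation without shrinking the admissible range of $\beta$.
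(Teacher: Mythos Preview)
The paper does not prove this statement: Theorem~\ref{thm:Samko} is quoted in the introduction as a known result of Samko~\cite{Samko}, and no proof is supplied anywhere in the paper. There is therefore nothing in the paper to compare your proposal against directly.

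That said, your outline is sound, and the three-region decomposition you propose is precisely the one the paper uses in Section~\ref{section:main-proof} for its main theorem in the Morrey setting: your $H$, local piece, and $H^{*}$ correspond to the paper's $J_1$, $J_2$, $J_3$. Your initial reduction from the variable weight exponents $\mu/\qq$, $\nu/\pp$ to the constant exponent $\beta=\nu/p(x_0)=\mu/q(x_0)$ via log-H\"older continuity is correct and is essentially Lemma~\ref{Lemma 4}, and your translation of the hypothesis on $\nu$ into $-n/q(x_0)<\beta<n/p'(x_0)$ is right. For the Hardy pieces $H$ and $H^{*}$, where you flag the main obstacle, the paper's treatment of $J_1$ and $J_3$ suggests a cleaner route than either freezing the target exponent or extrapolation: decompose the integration region dyadically in $|y-x_0|$, apply H\"older's inequality~\eqref{Holder} together with the characteristic-function bound of Lemma~\ref{Lemma 6} on each shell, and sum. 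The resulting geometric series converge exactly when $\beta<n/p'(x_0)$ (for the $H$ piece) and $\beta>\gamma-n/p(x_0)=-n/q(x_0)$ (for the $H^{*}$ piece), recovering the sharp range without any modular subtlety. Specializing that argument to $\lambda\equiv 0$ and $a=b=\beta$ would give a self-contained proof of the theorem along the lines you sketch.
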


More recently, the first and third authors~\cite[Theorem~3.1]{CrSu}, building on the work of Melchiori and Pradolini~\cite{MP18}, proved a version of the Stein-Weiss inequality in variable Lebesgue spaces.

\begin{theorem} \label{thm:var-stein-weiss}
Fix $0<\gamma<n$.  Given $\pp,\,\qq \in \Pp(\R^n)$, suppose that
$\pp,\,\qq \in LH(\R^n)$, $1<p_-\leq p_+<\infty$, $1<q_-\leq
q_+<\infty$, and $p(x)\leq q(x)$ for all $x\in \R^n$.  Given constants
$a,\, b$ that satisfy
\begin{equation*} \label{eqn:vsw1}
 -\frac{n}{q_+} < a\leq b < \frac{n}{(p_-)'}, 
\end{equation*}
suppose further that for all $x\in \R^n$, 
\begin{equation*} \label{eqn:vsw2}
  \frac{1}{p(x)}-\frac{1}{q(x)} = \frac{\gamma}{n} +
  \frac{a-b}{n}.
\end{equation*}
Then for all $f$ there exists a constant $C>0$ such that 
\[ \||\cdot|^a I_\gamma f\|_{\qq} \leq C \||\cdot|^b f\|_{\pp}. \]
\end{theorem}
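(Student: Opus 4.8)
The plan is to estimate $|\cdot|^{a}I_\gamma f$ by splitting $I_\gamma$ into three pieces according to the size of the integration variable $y$ relative to $|x|$ (the origin being the special point of the power weights), and to bound each piece by an operator whose variable‑exponent mapping properties are already available. We may assume $f\ge 0$, and it is convenient to set $\beta:=n\big(\tfrac1{p(x)}-\tfrac1{q(x)}\big)=\gamma-(b-a)$, a constant with $0\le\beta\le\gamma$ since $a\le b$; note that the hypothesis $q_+<\infty$ is equivalent to $p_+<n/\beta$, because $\tfrac1{q_+}=\tfrac1{p_+}-\tfrac{\beta}{n}$.

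First I would write, for $x\ne 0$,
\[
I_\gamma f(x)=\int_{|y|<|x|/2}+\int_{|x|/2\le|y|\le 2|x|}+\int_{|y|>2|x|}\frac{f(y)}{|x-y|^{n-\gamma}}\,dy=:Af(x)+Bf(x)+Cf(x).
\]
On the middle region one has $|y|\approx|x|$, so $|x|^{a}\approx|y|^{a}$ and $|x|^{b}\approx|y|^{b}$, and also $|x-y|\le 3|x|$; since $b-a\ge 0$ this gives the pointwise kernel bound $|x-y|^{-(n-\gamma)}=|x-y|^{b-a}|x-y|^{-(n-\beta)}\lesssim|x|^{b-a}|x-y|^{-(n-\beta)}$, whence $|x|^{a}Bf(x)\lesssim I_\beta(|\cdot|^{b}f)(x)$. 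Because $\tfrac1{p(x)}-\tfrac1{q(x)}=\beta/n$, $\pp\in LH(\R^n)$, and $1<p_-\le p_+<n/\beta$, the variable‑exponent Hardy--Littlewood--Sobolev inequality \cite{MR2414490} yields $\|I_\beta g\|_{\qq}\lesssim\|g\|_{\pp}$, so $\||\cdot|^{a}Bf\|_{\qq}\lesssim\||\cdot|^{b}f\|_{\pp}$. (In the borderline case $\beta=0$ one keeps the full kernel and instead invokes the classical Hardy--Littlewood--P\'olya criterion for homogeneous kernels, as in the original Stein--Weiss argument.)

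For the outer pieces, $|y|<|x|/2$ forces $|x-y|\ge|x|/2$, so $|x|^{a}Af(x)\lesssim |x|^{a+\gamma-n}\int_{|y|<|x|/2}|y|^{-b}\big(|y|^{b}f(y)\big)\,dy$, a weighted Hardy operator; likewise $|y|>2|x|$ forces $|x-y|\ge|y|/2$, so $|x|^{a}Cf(x)\lesssim |x|^{a}\int_{|y|>2|x|}|y|^{\gamma-n-b}\big(|y|^{b}f(y)\big)\,dy$, a weighted dual Hardy operator. For each I would invoke the corresponding variable‑exponent weighted Hardy inequality (or, absent a ready reference, prove it directly by a dyadic decomposition and duality, in the spirit of Samko's proof of Theorem~\ref{thm:Samko}): the hypotheses have been calibrated so that the condition $a>-n/q_+$ controls the piece $A$ and the condition $b<n/(p_-)'$ controls the piece $C$, while $\pp,\qq\in LH(\R^n)$---in particular, log‑H\"older both at the origin and at infinity---is what lets one replace the natural pointwise thresholds (involving the values $p(0),p_\infty,q(0),q_\infty$) by the single uniform ones. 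Summing the estimates for $A$, $B$, $C$ gives the theorem.

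I expect the main obstacle to be the two weighted Hardy estimates for the pieces $A$ and $C$: one must track carefully how $\pp$ and $\qq$ behave near $0$ and near $\infty$, verify that the uniform conditions $-n/q_+<a$ and $b<n/(p_-)'$ genuinely dominate the separate conditions natural at those two points, and check that the borderline scaling inherent in Hardy‑type inequalities is not lost. An alternative---and the route taken in \cite{CrSu}, building on \cite{MP18}---is to run an off‑diagonal Rubio de Francia extrapolation argument, reducing matters to a single fixed‑exponent weighted Stein--Weiss inequality for $I_\gamma$; in that approach the delicate point migrates to verifying a two‑weight condition for $I_\gamma$ with the power weights $|\cdot|^{a},|\cdot|^{b}$ together with a Muckenhoupt perturbation, since products of Muckenhoupt weights need not again be Muckenhoupt.
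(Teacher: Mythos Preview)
The paper does not actually contain a proof of this theorem: it is quoted from \cite[Theorem~3.1]{CrSu}, with only the remark that the argument there builds on Melchiori and Pradolini~\cite{MP18}. As you yourself observe in your final paragraph, the route taken in \cite{CrSu} is off-diagonal Rubio de Francia extrapolation, reducing matters to a constant-exponent two-weight inequality for $I_\gamma$; your primary approach---the three-region splitting with variable-exponent Hardy--Littlewood--Sobolev on the middle piece and weighted Hardy operators on the inner and outer pieces---is therefore genuinely different from the original source.

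Your approach is, however, close in spirit to how the present paper proves its main result, Theorem~\ref{thm:weight}, in the Morrey setting on a bounded domain. There the middle piece $J_2$ is controlled via the fractional maximal operator $M_\sigma$ with $\sigma=\gamma-(b-a)$ (your $\beta$) rather than via $I_\beta$, and the Hardy-type pieces $J_1,J_3$ are handled by explicit dyadic annular decomposition combined with H\"older's inequality and the characteristic-function estimate of Lemma~\ref{Lemma 6}, rather than by invoking abstract variable-exponent Hardy bounds. The extrapolation route of \cite{CrSu} avoids precisely the obstacle you correctly flag---establishing variable-exponent weighted Hardy inequalities on all of $\R^n$, with simultaneous control near $0$ and near $\infty$---at the price of verifying a two-weight Muckenhoupt-type condition; your direct route is viable in outline, but the Hardy pieces are where the real work lies, and on an unbounded domain that work is nontrivial and not covered by the references in this paper.
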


\medskip

In this paper we generalize Theorems~\ref{thm:KRRS},~\ref{thm:Samko},
and~\ref{thm:var-stein-weiss} by proving a version of the Stein-Weiss
inequality in variable exponent Morrey spaces defined on a bounded set
$\Omega$.  The variable exponent Morrey spaces were introduced by Almeida,
Hasanov, and Samko~\cite{AHS}; they proved bounds for the
Hardy-Littlewood maximal operator, fractional maximal operators, and fractional integrals on bounded
subsets of $\R^n$.   Additional
results on bounded domains can be found in~\cite{ GuHaSa, GuHaSa-1,
  GuHaSa-2, Has, MT, SaTe, SaTe-1}.  Our main result is the following.

\begin{theorem}\label{thm:weight}
Fix  \( 0 < \gamma < n \).    Let \(\Omega\subset \R^n\) be a bounded
domain and fix 
  \(x_0 \in \Omega\).  Given
  \( \pp, q(\cdot) \in \mathcal{P}(\Omega) \), suppose  that
  \( \pp, q(\cdot) \in LH_0(\Omega) \),
  \( 1 < p_{-} \le p_{+} < \infty \),
  \( 1 < q_{-} \le q_{+} < \infty \), and \( p(x) \le q(x) \) for all
  \( x \in \Omega \). Let $\lp : \Omega \to [0,n]$ be such that $0 < \lambda_- \leq \lambda_+ <n$ and  $\lp \in LH_0(\Omega)$.  Let $a,\,b$ be constants such
  that
  \begin{gather}
        0 \leq  b -a \le \gamma, \label{eqn:condA} \\
        \frac{\lambda_{+}-n}{q_+} < a \le b < \frac{n}{ (p')_{+}}, \label{eqn:condB} \\
        0 < \lambda_{-} \leq \lambda(x) < n - (\gamma - b + a)p(x), \label{eqn:condC} 
  \end{gather} 
    and
\begin{equation}\label{eqn:condD}
 \frac{\gamma + a- b}{n - \lambda(x)}= \frac{1}{p(x)} - \frac{1}{q(x)}.
\end{equation}
Then, there exists a constant $C>0$ such that for every function $f$,
\begin{equation}\label{SWI}
\| |x-x_0|^{a} I_{\gamma} f \|_{M_{q(\cdot)}^{\lp}(\Omega)} \leq C \| |x-x_0|^{b} f \|_{M_{\pp}^{\lp}(\Omega)}.
\end{equation}
\end{theorem}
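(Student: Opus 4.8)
The plan is to reduce the weighted bound \eqref{SWI} to the sharp (Adams-type) fractional integral inequality in variable exponent Morrey spaces by decomposing the Stein--Weiss kernel in the spirit of the original argument of \cite{SteinWeiss}. Set $h := |\cdot-x_0|^{b}f$ and $\tilde\gamma := \gamma + a - b$, so that $0 \le \tilde\gamma \le \gamma$ by \eqref{eqn:condA}; then \eqref{SWI} is equivalent to the boundedness from $M_{\pp}^{\lp}(\Omega)$ to $M_{\qq}^{\lp}(\Omega)$ of the kernel operator $h \mapsto \int_\Omega K(x,y)h(y)\,dy$ with $K(x,y) = |x-x_0|^{a}|y-x_0|^{-b}|x-y|^{-(n-\gamma)}$. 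For fixed $x$ I would split the $y$-integral into the three regions $R_{\mathrm I} = \{|y-x_0|\le\tfrac12|x-x_0|\}$, $R_{\mathrm{II}} = \{\tfrac12|x-x_0|<|y-x_0|<2|x-x_0|\}$, and $R_{\mathrm{III}} = \{|y-x_0|\ge 2|x-x_0|\}$.

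On the comparable region $R_{\mathrm{II}}$ one has $|y-x_0|\approx|x-x_0|$ and $|x-y|\le 3|x-x_0|$; writing $|x-y|^{-(n-\gamma)} = |x-y|^{-(n-\tilde\gamma)}|x-y|^{b-a}$ and using $b-a\ge0$ together with $|x-y|^{b-a}\lesssim|x-x_0|^{b-a}$ gives $K(x,y)\lesssim|x-y|^{-(n-\tilde\gamma)}$ there, so the $R_{\mathrm{II}}$-part of the operator is dominated pointwise by $I_{\tilde\gamma}|h|(x)$. Now \eqref{eqn:condC} and \eqref{eqn:condD} are precisely the hypotheses of the Adams theorem (Theorem~\ref{Ada}) for order $\tilde\gamma$ in the Morrey scale $n-\lambda(\cdot)$, so I would invoke---or, in this variable-$\lambda(\cdot)$ generality, first establish---the inequality $\|I_{\tilde\gamma}g\|_{M_{\qq}^{\lp}(\Omega)}\lesssim\|g\|_{M_{\pp}^{\lp}(\Omega)}$. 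That inequality itself follows from the variable-exponent Hedberg bound $I_{\tilde\gamma}g(x)\lesssim (Mg(x))^{p(x)/q(x)}\|g\|_{M_{\pp}^{\lp}(\Omega)}^{1-p(x)/q(x)}$, whose tail series converges exactly because $\tilde\gamma p(x)<n-\lambda(x)$ by \eqref{eqn:condC}, together with the boundedness of the Hardy--Littlewood maximal operator on $M_{\pp}^{\lp}(\Omega)$ from \cite{AHS}; the passage from the pointwise Hedberg bound to the Morrey norm uses $p(x)\le q(x)$ and the $LH_0$ hypotheses through a modular computation.

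On $R_{\mathrm I}$ one has $|x-y|\approx|x-x_0|$ and on $R_{\mathrm{III}}$ one has $|x-y|\approx|y-x_0|$; in both cases I would decompose into dyadic annuli centered at $x_0$ and estimate $\int|y-x_0|^{-b}|h(y)|\,dy$ over each annulus by the generalized H\"older inequality in $L^{\pp}(\Omega)$, using the weighted characteristic-function bound $\||\cdot-x_0|^{-b}\chi_{B(x_0,r)}\|_{L^{\cpp}(\Omega)}\lesssim r^{-b+n/p'(x_0)}$, which holds because $b<n/(p')_+$ by \eqref{eqn:condB}, together with $\|h\chi_{B(x_0,r)}\|_{\pp}\le r^{\lambda(x_0)/p(x_0)}\|h\|_{M_{\pp}^{\lp}(\Omega)}$ from the definition of the Morrey norm. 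Summing the resulting geometric series---which converge thanks to both inequalities in \eqref{eqn:condB}---and simplifying the exponents with \eqref{eqn:condD}, and using the $LH_0$ conditions on $\pp,\qq,\lp$ to interchange the values of these exponents at $x$ and at $x_0$ on balls of bounded radius, both regions yield the single pointwise estimate
\[ \int_{R_{\mathrm I}\cup R_{\mathrm{III}}}K(x,y)|h(y)|\,dy \;\le\; C\,\|h\|_{M_{\pp}^{\lp}(\Omega)}\,|x-x_0|^{-(n-\lambda(x))/q(x)}. \]
Finally, $\big\||\cdot-x_0|^{-(n-\lambda(\cdot))/q(\cdot)}\big\|_{M_{\qq}^{\lp}(\Omega)}$ is finite: testing against a ball $B(z,\rho)$, the relevant modular is $\int_{B(z,\rho)}|x-x_0|^{-(n-\lambda(x))}\,dx$, which converges and is $\lesssim\rho^{\lambda(z)}$ precisely because $\lambda_->0$ (from \eqref{eqn:condC}). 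Combining the $R_{\mathrm{II}}$ estimate with the $R_{\mathrm I}\cup R_{\mathrm{III}}$ estimate gives \eqref{SWI}.

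The main obstacle is twofold. First, one needs the sharp Adams-type inequality for $I_{\tilde\gamma}$ on $M_{\qq}^{\lp}(\Omega)$ with a variable Morrey exponent $\lp$; if it is not available in exactly this form it must be proved carefully, the delicate point being that the Hedberg optimization is carried out with the variable power $p(x)/q(x)$, forcing a modular rather than a norm argument. Second, all of \eqref{eqn:condA}--\eqref{eqn:condD} are consumed in the bookkeeping of the dyadic sums and the weighted characteristic-function norms over the bounded domain, and one must keep track of the position of $x_0$ relative both to the Morrey test ball $B(z,\rho)$ and to the point $x$; the three-region splitting of the kernel is classical, but interfacing it with the variable exponent Morrey machinery so that every geometric piece is summable is where the real work lies.
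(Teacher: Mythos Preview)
Your plan is essentially the same as the paper's: both split the kernel into the three regions $R_{\mathrm I},R_{\mathrm{II}},R_{\mathrm{III}}$ based on $|y-x_0|$ versus $|x-x_0|$, handle $R_{\mathrm I}$ and $R_{\mathrm{III}}$ by dyadic annuli centered at $x_0$ together with H\"older's inequality and the Morrey norm of $h$ to arrive at the pointwise bound $|x-x_0|^{-(n-\lambda(x))/q(x)}$, and then check that this function lies in $M_\qq^{\lp}(\Omega)$ via the integral estimate $\int_{\tB(z,r)}|x-x_0|^{\lambda(x)-n}\,dx\lesssim r^{\lambda(z)}$ (this is the paper's Lemma~\ref{lemma:integral-bound}).

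The only substantive difference is in $R_{\mathrm{II}}$. You reduce directly to $I_{\tilde\gamma}h$ and then quote (or reprove via Hedberg) the Adams-type bound for $I_{\tilde\gamma}$ on $M_{\pp}^{\lp}(\Omega)$; this result is available in the paper as Theorem~\ref{Theorem 2}, so no reproof is needed. The paper instead decomposes $R_{\mathrm{II}}$ into dyadic annuli \emph{centered at $x$} and bounds the contribution by $|x-x_0|^{-a}M_{\sigma}(|\cdot-x_0|^{b}f)(x)$ with $\sigma=\tilde\gamma$, summing the geometric factor $2^{-k(b-a)}$. Your route is slightly slicker when $\tilde\gamma>0$, but it breaks at the endpoint $\tilde\gamma=0$ (i.e.\ $b-a=\gamma$, $p\equiv q$): your kernel bound becomes $K(x,y)\lesssim|x-y|^{-n}$, and ``$I_{0}|h|$'' is not a well-defined operator, so the pointwise domination fails. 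The paper's dyadic-around-$x$ argument is what rescues this case, since $M_{0}=M$ is perfectly good and Theorem~\ref{Theorem 2} still applies. Apart from this endpoint (which is in scope by \eqref{eqn:condA} and is easily patched by adopting the paper's $J_2$ argument there), your proposal is correct and matches the paper.
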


\begin{remark}
  Our proof, while based on that of Theorem~\ref{thm:KRRS}, does not
  seem to extend to unbounded domains.  It is an open problem to prove
  a Stein-Weiss type inequality on variable exponent Morrey spaces defined on
  all of $\R^n$.  For unweighted bounds for fractional integrals and fractional
  maximal operators in this setting, see Ho~\cite{Ho}.
\end{remark}


The remainder of the paper is organized as follows.  In
Section~\ref{section:prelim} we gather some basic results about
variable Lebesgue spaces, variable exponent Morrey spaces, fractional integral
operators and the associated fractional maximal operators.  In
Section~\ref{section:main-proof} we prove Theorem~\ref{thm:weight}.
Finally, in Section~\ref{section:applications} we give an application
of our main result and use it to prove Sobolev and
Poincar\'e inequalities in the variable exponent Morrey spaces.

\section{Preliminaries}
\label{section:prelim}

In this section we establish some basic notation and preliminary
results.  Throughout, $n$ will denote the dimension of the underlying
space $\R^n$, and $\Omega \subseteq \R^n$ will be a bounded, open
set.  For $x\in \Omega$ and $r>0$, we define $B(x,r)=\{ y \in \R^n :
|x-y|<r\}$ and $\tB(x,r) = B(x,r) \cap \Omega$.

By $C$ we will mean a constant whose values depend on the
underlying parameters and whose value may change from line to line.
If we write $A\lesssim B$, we mean that there exists a constant $c$
such that $A\leq cB$.  

\subsection*{Variable Lebesgue spaces}
Here we define the variable Lebesgue spaces and give some basic properties.
For more detailed information and proofs of these properties, we refer the reader to~\cite{Cruz}.  

A variable exponent is a measurable function $\pp : \Omega \rightarrow
[1,\infty]$.  We denote the set of exponent functions by
$\mathcal{P}(\Omega)$.  Let
\[ p^{+}= \esssup_{x \in \Omega} p(x) \quad \text{and} \quad p^{-}= \essinf_{x \in \Omega} p(x). \]

A function $r(\cdot) : \Omega \to \R$ is said to be locally
Log-H\"older continuous, denoted by $r(\cdot) \in
LH_0(\Omega)$, if there exists a constant $C_0$ such that
\[ |r(x) - r(y)| \leq \frac{C_0}{-\log(|x - y|)}
\]
for all \( x, y \in \Omega \) with \( |x - y| < \frac{1}{2} \).

Given an exponent $\pp \in \Pp(\Omega)$, define the associated modular by
\[ \rho_\pp(f) = \int_{\Omega\setminus \Omega_\infty}
  |f(x)|^{p(x)}\,dx
  + \|f\|_{L^\infty(\Omega_\infty)}, 
\]
where $\Omega_\infty=\{ x\in \Omega : p(x)=\infty\}$. 

\begin{remark}
  When $p_+<\infty$, the set $\Omega_\infty$ is empty and the modular
  reduces to the first integral.
\end{remark}

Define $\Lpp(\Omega)$ to be the collection of measurable functions
with domain $\Omega$  such that
\[ \|f\|_\pp = \|f\|_{\Lpp(\Omega)}
  = \inf\{ \lambda >0 : \rho(f/\lambda) \leq 1 \}.  \]
Then $\|\cdot\|_\pp$ is a norm and $\Lpp(\Omega)$ is a Banach function
space.


Given $\pp \in \Pp(\Omega)$,  define the conjugate exponent \(
p'(\cdot) \)pointwise by 
\[
\frac{1}{p(x)}+\frac{1}{p'(x)} = 1, 
\]  
with the conventions that $\frac{1}{\infty}=0$. 
H\"older's inequality extends to the variable Lebesgue spaces:
\begin{equation}\label{Holder}
\int_\Omega |f(x) g(x)| \, dx \leq  
4 \|f\|_{\pp} \|g\|_{p'(\cdot)}.   
\end{equation}
The following generalized H\"older's inequality~\cite[Corollary 2.28.]{Cruz} also holds:  if \( \pp, q(\cdot), r(\cdot) \in \mathcal{P} (\Omega) \) satisfy  
\[
\frac{1}{r(x)} = \frac{1}{p(x)} + \frac{1}{q(x)}, \quad \text{for all } x \in \Omega,
\]  
then there exists a constant \( K=K(\pp,\qq)\geq 1 \) such that for any \( f \in L^{\pp}(\Omega) \) and \( g \in L^{q(\cdot)}(\Omega) \), 
\begin{equation}\label{Gholder}
\|fg\|_{L^{r(\cdot)}(\Omega)} \leq K \|f\|_{L^{\pp}(\Omega)} \|g\|_{L^{q(\cdot)}(\Omega)}.
\end{equation}

The following lemmas will be used repeatedly in our proofs.   Most of them rely heavily on the fact that $\Omega$ is bounded.  The first is
from~\cite[Lemma~4]{AHS}.  

\begin{lemma}\label{Lemma 4}
  Given a bounded open set $\Omega\subset
  \R^n$, suppose $\lp\in LH_0(\Omega)$. 
Then there exists a constant $C=C(\lambda,\Omega)$ such that for any $s>0$ and all $x, y \in \Omega$ such that $|x - y| \leq s$,
\[
C^{-1} s^{-\lambda(y)} \leq s^{-\lambda(x)} \leq C s^{-\lambda(y)}.
\]
\end{lemma}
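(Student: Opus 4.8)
\textbf{Plan for the proof of Lemma~\ref{Lemma 4}.}
The statement is that, under the local log-H\"older hypothesis on $\lp$, the quantity $r^{-\lambda(x)}$ is comparable to $r^{-\lambda(y)}$ with a constant independent of $r$ and the points, provided $|x-y|\le r$. My plan is to reduce the claim to a uniform bound on $r^{\lambda(y)-\lambda(x)}$, which by symmetry amounts to showing
\[
  r^{-|\lambda(x)-\lambda(y)|} \ge C^{-1}
\]
for all admissible $x,y,r$, and then to split the argument according to whether $r$ is small or large. The essential point is that the bound must be genuinely uniform in $r$, so the two regimes $0<r<1$ and $r\ge 1$ require separate treatment; the log-H\"older condition only controls the exponent when the points are close, so I must also distinguish whether $|x-y|$ is small.

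First I would record that it suffices to bound $r^{\lambda(y)-\lambda(x)}$ above by a constant, since interchanging $x$ and $y$ then gives the two-sided estimate, and the constant $C$ in the statement is the larger of the two one-sided constants. Taking logarithms, the target is to show that $(\lambda(y)-\lambda(x))\log r$ stays bounded above uniformly. The quantity $|\lambda(x)-\lambda(y)|$ is in all cases at most $\lambda_+-\lambda_-\le n$, which handles the regime where $\log r$ is bounded: since $\Omega$ is bounded, $|x-y|$ is at most $\mathrm{diam}(\Omega)$, and the only way $\log r$ can be large in absolute value is $r\to 0^+$, so the case $r\ge 1$ is immediate from the crude bound $r^{\lambda(y)-\lambda(x)}\le r^{\,n}\cdot r^{-n}$-type estimates, i.e. $r^{\lambda(y)-\lambda(x)}\le \max(1,r^{\,n})\cdot\max(1,r^{-n})$ restricted appropriately; concretely, for $r\ge 1$ we have $r^{\lambda(y)-\lambda(x)}\le r^{\lambda_+-\lambda_-}\le (\mathrm{diam}\,\Omega)^{\,n}$, a fixed constant.

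The substantive regime is $0<r<1$. Here I would further split on the size of $|x-y|$. If $|x-y|<\tfrac12$, the hypothesis $\lp\in LH_0(\Omega)$ applies directly:
\[
  |\lambda(x)-\lambda(y)|\le \frac{C_0}{-\log|x-y|}\le \frac{C_0}{-\log r},
\]
using $|x-y|\le r<1$ so that $-\log|x-y|\ge -\log r>0$ and the map $t\mapsto 1/(-\log t)$ is increasing on $(0,1)$. Multiplying by $-\log r$ gives $\big|(\lambda(y)-\lambda(x))\log r\big|\le C_0$, whence $r^{\lambda(y)-\lambda(x)}\le e^{C_0}$, a uniform bound. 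If instead $\tfrac12\le |x-y|\le r<1$ (a thin range, but it must be covered), then $r$ is bounded below by $\tfrac12$, so $-\log r\le \log 2$ and the crude bound $|\lambda(x)-\lambda(y)|\le n$ yields $r^{\lambda(y)-\lambda(x)}\le 2^{\,n}$ at once.

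Combining the three cases and taking $C=\max\{(\mathrm{diam}\,\Omega)^{\,n},\,e^{C_0},\,2^{\,n}\}$ gives the one-sided bound $r^{\lambda(y)-\lambda(x)}\le C$, and symmetry in $x,y$ upgrades this to the claimed two-sided comparability $C^{-1}r^{-\lambda(y)}\le r^{-\lambda(x)}\le C\,r^{-\lambda(y)}$. The only mild subtlety is organizing the case split so that the log-H\"older estimate is invoked exactly where it is valid (small $|x-y|$) while the boundedness of $\Omega$ and of the oscillation $\lambda_+-\lambda_-$ absorbs the complementary ranges; I expect no genuine obstacle, only the bookkeeping of keeping every bound independent of $r$.
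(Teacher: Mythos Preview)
The paper does not supply its own proof of this lemma; it is simply quoted from \cite[Lemma~4]{AHS}, so there is no in-paper argument to compare yours against.

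Your treatment of the substantive regime $0<r<1$ is correct and is the standard argument: when $|x-y|<\tfrac12$, the log-H\"older hypothesis together with $|x-y|\le r$ gives $|\lambda(x)-\lambda(y)|\le C_0/(-\log r)$, hence $r^{\lambda(y)-\lambda(x)}\le e^{C_0}$; and when $\tfrac12\le |x-y|\le r<1$, the crude oscillation bound $|\lambda(x)-\lambda(y)|\le n$ and $r\ge\tfrac12$ give $r^{\lambda(y)-\lambda(x)}\le 2^n$.

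There is, however, a real gap in your case $r\ge 1$. You assert $r^{\lambda_+-\lambda_-}\le (\mathrm{diam}\,\Omega)^n$, which silently assumes $r\le \mathrm{diam}\,\Omega$; but the hypothesis $|x-y|\le r$ imposes no upper bound on $r$ whatsoever (your remark that ``the only way $\log r$ can be large in absolute value is $r\to 0^+$'' is simply not so). In fact the lemma \emph{as stated here} is false for unbounded $r$ whenever $\lambda$ is non-constant: pick $x_1,x_2\in\Omega$ with $\lambda(x_1)\ne\lambda(x_2)$ and let $r\to\infty$; then $r^{-\lambda(x_1)}/r^{-\lambda(x_2)}$ is unbounded. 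The formulation in \cite{AHS} restricts to $0<r\le \mathrm{diam}\,\Omega$, and every use of the lemma in the present paper invokes it with $r$ bounded by a fixed multiple of $\mathrm{diam}\,\Omega$, so this is a slight misstatement in the paper rather than a defect in your method. With the restriction $r\le \mathrm{diam}\,\Omega$ added, your argument is complete.
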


The second is from~\cite[Lemma~6]{AHS}.

\begin{lemma}\label{Lemma 6}
  Given a bounded open set $\Omega\subset
  \R^n$, suppose $\pp \in \Pp(\Omega)$ satisfies $\pp\in LH_0(\Omega)$. 
Then there exists a constant $C=C(\pp,\Omega)$ such that for all $x\in
\Omega$ and $r>0$,
\[ 
\|\chi_{B(x, r)}\|_{{\pp}} \leq C |B(x, r)|^{\frac{1}{p(x)}} .
\]
\end{lemma}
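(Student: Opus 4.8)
The plan is to bound the Luxemburg norm from above by exhibiting an explicit level $\lambda_0$ at which the modular drops to $1$: since
\[
\|\chi_{B(x,r)}\|_{\pp}=\inf\{\lambda>0:\rho_{\pp}(\chi_{B(x,r)}/\lambda)\le 1\},
\]
any $\lambda_0$ with $\rho_{\pp}(\chi_{B(x,r)}/\lambda_0)\le 1$ forces $\|\chi_{B(x,r)}\|_{\pp}\le\lambda_0$. Since $\pp\in LH_0(\Omega)$ and $\Omega$ is bounded we have $p_+<\infty$, so $\Omega_\infty=\emptyset$ and $\rho_{\pp}(g)=\int_\Omega|g(y)|^{p(y)}\,dy$. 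Writing $E=\tB(x,r)=B(x,r)\cap\Omega$, the function $\chi_{B(x,r)}$ regarded on $\Omega$ equals $\chi_E$, so with $\lambda_0=C\,|B(x,r)|^{1/p(x)}$ (where $C\ge 1$ is to be fixed) one has
\[
\rho_{\pp}(\chi_E/\lambda_0)=\int_E C^{-p(y)}\,|B(x,r)|^{-p(y)/p(x)}\,dy .
\]

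The heart of the argument is the uniform pointwise bound
\[
|B(x,r)|^{-p(y)/p(x)}\le C'\,|B(x,r)|^{-1}\qquad (y\in E)
\]
for small radii, which is exactly where $\pp\in LH_0(\Omega)$ is indispensable. Its ratio to $|B(x,r)|^{-1}$ equals $|B(x,r)|^{(p(x)-p(y))/p(x)}=\exp\!\big(\tfrac{p(x)-p(y)}{p(x)}\log|B(x,r)|\big)$, and with $|B(x,r)|=c_n r^n$ and $r<\tfrac12$ the exponent is controlled by local log-H\"older continuity: for $y\in E$ we have $|x-y|<r<\tfrac12$, so $|p(x)-p(y)|\le C_0/(-\log|x-y|)\le C_0/(-\log r)$; since $|\log|B(x,r)||\le|\log c_n|+n(-\log r)$ and $-\log r>\log 2$, the product $\tfrac{|p(x)-p(y)|}{p(x)}|\log|B(x,r)||$ is bounded by a constant depending only on $n,\,p_-,\,C_0$ and $c_n$. (Equivalently, one may invoke Lemma~\ref{Lemma 4} applied to $\pp$ to compare $r^{-p(x)}$ with $r^{-p(y)}$.) This gives the displayed bound with $C'=C'(n,p_-,C_0)$.

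Feeding this into the modular, and using $C^{-p(y)}\le C^{-p_-}$ together with $|E|\le|B(x,r)|$, gives
\[
\rho_{\pp}(\chi_E/\lambda_0)\le C^{-p_-}C'\,|B(x,r)|^{-1}|E|\le C^{-p_-}C',
\]
which is $\le 1$ as soon as $C\ge (C')^{1/p_-}$; this proves the lemma for $0<r<\tfrac12$. For large radii $r\ge\tfrac12$, where the log-H\"older estimate is unavailable, I would instead use the boundedness of $\Omega$ directly: since $E\subseteq\Omega$, monotonicity of the norm gives $\|\chi_E\|_{\pp}\le\|\chi_\Omega\|_{\pp}=:C_\Omega<\infty$ (finite because $|\Omega|<\infty$ and $p_+<\infty$; indeed $\rho_{\pp}(\chi_\Omega/\lambda)\le\lambda^{-p_-}|\Omega|\le1$ once $\lambda\ge\max\{1,|\Omega|^{1/p_-}\}$), while $|B(x,r)|^{1/p(x)}=(c_n r^n)^{1/p(x)}\ge c_*>0$ with $c_*:=\min\{(c_n 2^{-n})^{1/p_-},(c_n 2^{-n})^{1/p_+}\}$. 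Hence $\|\chi_E\|_{\pp}\le C_\Omega\le (C_\Omega/c_*)\,|B(x,r)|^{1/p(x)}$, and taking $C$ to be the larger of the constants from the two cases completes the proof.

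I expect the only real difficulty to be the uniform estimate $|B(x,r)|^{-p(y)/p(x)}\lesssim|B(x,r)|^{-1}$ of the second paragraph, which converts the oscillation of $\pp$ across a small ball into a bounded multiplicative error; this is precisely the content of the $LH_0$ hypothesis. Everything else --- passing to the modular, and disposing of large balls through the finiteness of $|\Omega|$ --- is routine once $\Omega$ is bounded and $p_+<\infty$.
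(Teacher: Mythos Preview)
Your proof is correct. Note, however, that the paper does not give its own proof of this lemma: it is quoted verbatim from \cite[Lemma~6]{AHS}, so there is nothing to compare against directly. Your argument is the standard one for this type of estimate in variable Lebesgue spaces (see, e.g., \cite[Lemma~3.24 and Corollary~2.23]{Cruz} for closely related computations): split into small and large balls, use the $LH_0$ condition on small balls to show $|B(x,r)|^{-p(y)/p(x)}\lesssim |B(x,r)|^{-1}$ uniformly for $y\in B(x,r)\cap\Omega$, and dispose of large balls trivially via the boundedness of $\Omega$.

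One remark on presentation: the claim that ``$\pp\in LH_0(\Omega)$ and $\Omega$ bounded imply $p_+<\infty$'' is true but perhaps deserves a word of justification. Since the paper defines $LH_0$ only for real-valued functions, $\pp$ is finite everywhere; covering the compact set $\overline{\Omega}$ by finitely many balls of radius $\tfrac14$ and using the $LH_0$ bound on each then gives $p_+<\infty$. Everything else is clean; in particular your parenthetical observation that the key pointwise comparison is equivalent to Lemma~\ref{Lemma 4} applied to $\pp$ is exactly right, and is how \cite{AHS} organizes the argument.
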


The third is a consequence of well-known properties of the variable exponent norm.  

\begin{lemma} \label{lemma:char-estimate}
Given a bounded open set $\Omega \subset \R^n$, and $s(\cdot) \in \Pp(\Omega)$, 
$\|\chi_\Omega\|_{L^{s(\cdot)}(\Omega)} \leq 1+|\Omega|$.
\end{lemma}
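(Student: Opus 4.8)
The plan is to argue directly from the definition of the norm as the infimum of those $\lambda>0$ for which the modular satisfies $\rho_\sp(\chi_\Omega/\lambda)\le 1$; it therefore suffices to exhibit one admissible value of $\lambda$, and I would take the explicit choice $\lambda = 1+|\Omega|$. The only analytic ingredient is the elementary pointwise inequality $t^{s}\le t$, valid whenever $0<t\le 1$ and $s\ge 1$ (since $t^{s}=t\cdot t^{s-1}$ and $t^{s-1}\le 1$). Because $\lambda = 1+|\Omega|\ge 1$, we have $0<\lambda^{-1}\le 1$, and because $\sp\in\Pp(\Omega)$ we have $s(x)\ge 1$ for all $x$, so this inequality applies with $t=\lambda^{-1}$ at every point.

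Writing $\Omega_\infty=\{x\in\Omega : s(x)=\infty\}$, I would then bound the two pieces of the modular separately. On the integral piece, since $|\chi_\Omega(x)/\lambda|=\lambda^{-1}\in(0,1]$, the elementary inequality gives $(\lambda^{-1})^{s(x)}\le \lambda^{-1}$ pointwise, whence
\[
\int_{\Omega\setminus\Omega_\infty}\big(\lambda^{-1}\big)^{s(x)}\,dx \;\le\; \lambda^{-1}\,|\Omega\setminus\Omega_\infty| \;\le\; \frac{|\Omega|}{\lambda}.
\]
On the $L^\infty$ piece one has $\|\chi_\Omega/\lambda\|_{L^\infty(\Omega_\infty)}\le \lambda^{-1}$, with this term simply absent when $\Omega_\infty$ is null. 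Adding the two bounds gives
\[
\rho_\sp(\chi_\Omega/\lambda) \;\le\; \frac{|\Omega|}{\lambda} + \frac{1}{\lambda} \;=\; \frac{1+|\Omega|}{\lambda} \;=\; 1,
\]
so by definition $\|\chi_\Omega\|_{\Lsp(\Omega)}\le \lambda = 1+|\Omega|$, which is the claim.

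There is no substantive obstacle in this argument; it is a one-line verification once the correct test value of $\lambda$ is chosen. The only point demanding minor care is the bookkeeping between the two components of the modular, and in particular checking that the estimate degrades gracefully in the boundary cases where $\Omega_\infty$ is empty (the $L^\infty$ term drops and the bound only improves) or where $s\equiv\infty$ (the integral term drops); both are absorbed automatically by the display above. Essential use of the boundedness of $\Omega$ enters only through the finiteness of $|\Omega|$, which is what makes the stated constant $1+|\Omega|$ meaningful.
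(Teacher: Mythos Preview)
Your proof is correct. It differs from the paper's in that the paper first disposes of the case $\|\chi_\Omega\|_{L^{s(\cdot)}(\Omega)}\le 1$ trivially and then, assuming the norm exceeds $1$, invokes the standard norm--modular inequality (cited from \cite[Corollary~2.22]{Cruz}) to conclude $\|\chi_\Omega\|_{L^{s(\cdot)}(\Omega)}\le \rho_{s(\cdot)}(\chi_\Omega)\le |\Omega|+1$. You instead bypass that external lemma entirely: you fix $\lambda=1+|\Omega|$ and verify $\rho_{s(\cdot)}(\chi_\Omega/\lambda)\le 1$ directly via the pointwise bound $(1/\lambda)^{s(x)}\le 1/\lambda$. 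Your route is fully self-contained and avoids the case split, at the cost of one extra line of computation; the paper's route is marginally shorter but relies on a cited result. Both are entirely elementary.
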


\begin{proof}
    If $\|\chi_\Omega\|_{L^{s(\cdot)}(\Omega)} \leq 1$, then we are done, so assume the opposite inequality holds.  Then by \cite[Corollary~2.22]{Cruz},
    \[ \|\chi_\Omega\|_{L^{s(\cdot)}(\Omega)} \leq \rho_{s(\cdot)}(\chi_\Omega)
    = \int_{\Omega\setminus\Omega_\infty} \,dx + \|\chi_\Omega\|_{L^\infty(\Omega_\infty)} \leq |\Omega|+1. \]
\end{proof}

Finally, we will need the following integral estimate, which is a consequence of Lemma~\ref{Lemma 4}.

\begin{lemma} \label{lemma:integral-bound}
Given a bounded open set $\Omega\subset
  \R^n$, suppose $\lp :\Omega \to [0,n]$ is such that $0<\lambda_-\leq \lambda_+<n$ and  $\lp\in LH_0(\Omega)$.  Then for
  every  $x_0 \in \Omega$ and $r>0$.  
  \begin{equation}\label{Bound}
\int_{\tB(x_0,r)} |x-x_0|^{\lambda(x) - n} dx \le  C r^{\lambda(x_0)}.
\end{equation}
\end{lemma}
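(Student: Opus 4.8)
The plan is to discard the constraint "$\cap\,\Omega$" — since the integrand is nonnegative and $\tB(x_0,r)\subseteq B(x_0,r)$, it suffices to bound $\int_{B(x_0,r)}|x-x_0|^{\lambda(x)-n}\,dx$ — and then to cut $B(x_0,r)$ into dyadic annuli about $x_0$. Concretely, I would write $B(x_0,r)\setminus\{x_0\}=\bigcup_{k\ge 0}A_k$ with $A_k=\{x:2^{-k-1}r\le |x-x_0|<2^{-k}r\}$, so that the integral is $\sum_{k\ge0}\int_{A_k\cap\Omega}|x-x_0|^{\lambda(x)-n}\,dx$. On $A_k$ one has $|x-x_0|\ge 2^{-k-1}r$, and since $\lambda(x)-n<0$ the map $t\mapsto t^{\lambda(x)-n}$ is decreasing, so $|x-x_0|^{\lambda(x)-n}\le (2^{-k-1}r)^{\lambda(x)-n}\le 2^n(2^{-k}r)^{\lambda(x)-n}$, where the last step uses $\lambda(x)\ge 0$ to pass from the radius $2^{-k-1}r$ to $2^{-k}r$ in the nonnegative power.

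The key step is to replace the variable quantity $\lambda(x)$ by $\lambda(x_0)$ with constants uniform in $k$ and $r$. Every $x\in A_k$ satisfies $|x-x_0|<2^{-k}r$, so Lemma~\ref{Lemma 4} applied with radius $\rho=2^{-k}r$ gives a constant $C=C(\lambda,\Omega)$, independent of $k$ and $r$, with $(2^{-k}r)^{\lambda(x)}\le C\,(2^{-k}r)^{\lambda(x_0)}$. Combining this with the crude measure bound $|A_k\cap\Omega|\le |B(x_0,2^{-k}r)|=c_n(2^{-k}r)^n$, I obtain
\[
\int_{A_k\cap\Omega}|x-x_0|^{\lambda(x)-n}\,dx
\le C\,2^n\,(2^{-k}r)^{\lambda(x_0)-n}\cdot c_n(2^{-k}r)^n
= C'\,2^{-k\lambda(x_0)}\,r^{\lambda(x_0)}.
\]

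Finally I would sum in $k$. Here the hypothesis $\lambda(x_0)\ge\lambda_->0$ is exactly what is needed: the geometric series $\sum_{k\ge0}2^{-k\lambda(x_0)}$ converges, with $\sum_{k\ge0}2^{-k\lambda(x_0)}\le (1-2^{-\lambda_-})^{-1}$, a constant depending only on $\lambda_-$. This yields $\int_{\tB(x_0,r)}|x-x_0|^{\lambda(x)-n}\,dx\le C\,r^{\lambda(x_0)}$ with $C=C(\lambda,\Omega,n)$, as claimed. There is no real obstacle here; the only points to be careful about are that the whole argument is uniform in $r>0$ (Lemma~\ref{Lemma 4} holds for every $\rho>0$, the boundedness of $\Omega$ being already absorbed into it, and the measure bound for $A_k\cap\Omega$ is valid whether or not $A_k$ meets $\Omega$), and that it is the strict positivity of $\lambda_-$, rather than the bound $\lambda_+<n$, that makes the dyadic sum converge — with $\lambda_-=0$ allowed one would only get a logarithmic bound.
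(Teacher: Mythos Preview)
Your argument is correct and takes a genuinely different route from the paper's. The paper does not decompose into dyadic annuli; it splits on $r\le 1$ versus $r>1$, replaces $\lambda(x)$ by $\lambda_+$ or $\lambda_-$ according to whether $|x-x_0|\le 1$ or $|x-x_0|\ge 1$, and evaluates the resulting radial integrals directly. Your approach handles all $r$ at once and makes the role of $\lambda_->0$ transparent as the convergence of a geometric series.

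One caveat: your key step invokes Lemma~\ref{Lemma 4} to pass from $(2^{-k}r)^{\lambda(x)}$ to $(2^{-k}r)^{\lambda(x_0)}$, and that lemma requires $\lp\in LH_0(\Omega)$, a hypothesis not listed in the present statement. This is not a defect of your argument so much as a missing hypothesis in the lemma itself: without some regularity of $\lambda$ at $x_0$ the bound is actually false (take $\lambda(x_0)=\lambda_+$ and $\lambda\equiv\lambda_-$ off $x_0$; the integral then scales like $r^{\lambda_-}$ as $r\to 0$, which cannot be dominated by $Cr^{\lambda_+}$). In the paper's applications $\lp\in LH_0(\Omega)$ is always in force, so your proof covers exactly the case that is needed. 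The paper's own argument attempts to avoid log-H\"older continuity, but its first displayed inequality in the case $r\le 1$, namely $|x-x_0|^{\lambda(x)-n}\le |x-x_0|^{\lambda_+-n}$ for $|x-x_0|\le 1$, points the wrong way (for $t\le 1$ and $\lambda(x)\le\lambda_+$ one has $t^{\lambda(x)-n}\ge t^{\lambda_+-n}$); in this instance your route is the more reliable one.
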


\begin{proof}
Since $\lp\in LH_0(\Omega)$, $n-\lp \in LH_0(\Omega)$.  Therefore, by Lemma~\ref{Lemma 4}, taking $s=|x-x_0|$ in the lemma, and integrating by parts,  we have that
\[ 
   \int_{\tB(x_0,r)} |x-x_0|^{\lambda(x) - n} dx 
 \le C(\lambda, \Omega) \int_{B(x_0,r)} |x-x_0|^{\lambda(x_0) - n} dx 
 =  C(n,\lambda , \Omega) r^{\lambda(x_0)} .
 \]
%
\end{proof}

\subsection*{Variable exponent Morrey spaces}

Our definition of the variable exponent Morrey spaces is taken
from~\cite{AHS} and we refer the reader there for more information.  
Given $\lp : \Omega \to [0,n]$  measurable and $\pp \in
\Pp(\Omega)$, define the
modular 
\[
I_{\pp, \lp}(f) := \sup_{x \in \Omega, \, r > 0}
r^{-\lambda(x)} \int_{\tilde{B}(x, r)} |f(y)|^{p(y)} \, dy.  
\]
Then the variable exponent Morrey space \(
M_{\pp}^{\lp}(\Omega) \) is defined as the set of
all measurable functions $f$ on $\Omega$ such that 
\[
\|f\|_{M_{\pp}^{\lp}(\Omega)} = \inf \left\{ \eta > 0
  : I_{\pp, \lp}\left( \frac{f}{\eta} \right) \le 1
\right\} < \infty.
\]

To prove norm inequalities in the variable exponent Morrey spaces, we need the
following lemma.  This property is well known in the variable Lebesgue
spaces (cf.~\cite[Section~3.4]{Cruz}).

\begin{lemma} \label{lemma:modular-to-norm}
Given a function $g$, $\|g\|_{M^{\lp}_\pp(\Omega)} \leq C_1$ if and only if 
 $I_{\pp,\lp}(g) \leq C_2$ for some constants $C_1,\,C_2>0$.  
 \end{lemma}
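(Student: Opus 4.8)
The plan is to argue exactly as for the modular--norm equivalence in the variable Lebesgue spaces (cf.\ \cite[Section~3.4]{Cruz}), exploiting that $t\mapsto t^{p(y)}$ is homogeneous and that $1\le p_-\le p(y)\le p_+<\infty$. The cutoff $\lp$ and the dilation parameter $r$ play no role here, since scaling $g$ by a positive constant $\eta$ leaves each weight $r^{-\lambda(x)}$ untouched; only the way $\eta$ interacts with the variable power $p(y)$ matters.

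First I would record two elementary facts. \textbf{(i)} The map $\eta\mapsto I_{\pp,\lp}(g/\eta)$ is non-increasing on $(0,\infty)$, because for each fixed $y$ the quantity $|g(y)/\eta|^{p(y)}$ is non-increasing in $\eta$, and the supremum defining $I_{\pp,\lp}$ preserves this monotonicity. Hence, directly from the definition of $\|g\|_{M_\pp^{\lp}(\Omega)}$ as an infimum: $I_{\pp,\lp}(g/\eta)\le 1$ for every $\eta>\|g\|_{M_\pp^{\lp}(\Omega)}$, and conversely $\|g\|_{M_\pp^{\lp}(\Omega)}\le\eta$ whenever $I_{\pp,\lp}(g/\eta)\le 1$. \textbf{(ii)} For $\eta\ge 1$ we have $\eta^{-p_+}\le\eta^{-p(y)}\le\eta^{-p_-}$ pointwise in $y$; pulling the scalar out of the integrand and then out of the supremum gives
\[ \eta^{-p_+}\,I_{\pp,\lp}(g)\ \le\ I_{\pp,\lp}(g/\eta)\ \le\ \eta^{-p_-}\,I_{\pp,\lp}(g),\qquad \eta\ge 1. \]

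With these in hand the two implications are short. If $\|g\|_{M_\pp^{\lp}(\Omega)}\le C_1$, take $\eta=C_1+1$, which exceeds $\|g\|_{M_\pp^{\lp}(\Omega)}$ and satisfies $\eta\ge 1$; by \textbf{(i)} $I_{\pp,\lp}(g/\eta)\le 1$, and then the left-hand inequality in \textbf{(ii)} yields $I_{\pp,\lp}(g)\le\eta^{p_+}I_{\pp,\lp}(g/\eta)\le(C_1+1)^{p_+}=:C_2$. Conversely, if $I_{\pp,\lp}(g)\le C_2$, set $\eta=1+C_2^{1/p_-}\ge 1$; since $\eta^{p_-}\ge\big(C_2^{1/p_-}\big)^{p_-}=C_2$, the right-hand inequality in \textbf{(ii)} gives $I_{\pp,\lp}(g/\eta)\le\eta^{-p_-}C_2\le 1$, whence by \textbf{(i)} $\|g\|_{M_\pp^{\lp}(\Omega)}\le\eta=1+C_2^{1/p_-}=:C_1$. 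I do not anticipate any real obstacle: the only points needing care are the infimum bookkeeping in \textbf{(i)} and the fact that the scalar comparison in \textbf{(ii)} holds only for $\eta\ge 1$, which is precisely why the constants above are chosen to be at least $1$; implicitly we also use $p_+<\infty$, as is built into the definition of $I_{\pp,\lp}$.
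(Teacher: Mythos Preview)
Your proof is correct and follows essentially the same approach as the paper's: both arguments pivot on the pointwise bounds $\eta^{-p_+}\le\eta^{-p(y)}\le\eta^{-p_-}$ for $\eta\ge 1$ together with the definition of the Luxemburg-type norm as an infimum. Your version is slightly more explicit about the monotonicity bookkeeping (your fact \textbf{(i)}), and you add $1$ to your scaling constants rather than assuming $C_1,C_2\ge 1$ without loss of generality, but these are cosmetic differences only.
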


  \begin{proof}
First, suppose that  $I_{\pp,\lp}(g) \leq C_2$. We may assume $C_2\geq 1$, (for \(0 < C_{2} < 1\), one can choose \(C_{1} = 1\)). Then we have that
    \begin{equation*}
      1
       \geq
       C_2^{-1} \sup_{x \in \Omega, \, r > 0}
        r^{-\lambda(x)} \int_{\tilde{B}(x, r)} |g(y)|^{p(y)} \, dy 
       \geq \sup_{x \in \Omega, \, r > 0}
        r^{-\lambda(x)} \int_{\tilde{B}(x, r)}
        |C_2^{-\frac{1}{p_-}}g(y)|^{p(y)} \, dy.
    \end{equation*}
    Thus, by the definition of the variable exponent Morrey space norm,
    $\|C_2^{-\frac{1}{p_-}} g\|_{M^\lp_\pp(\Omega)} \leq 1$, which
    gives us the desired estimate with $C_1=C_2^{\frac{1}{p_-}}$. 

    Now suppose that $\|g\|_{M^{\lp}_\pp(\Omega)} \leq C_1$. Without loss of generality, we may assume $C_1\geq 1$.  Then, again by the definition, we have that 
    \[ 1 \geq \sup_{x \in \Omega, \, r > 0}
        r^{-\lambda(x)} \int_{\tilde{B}(x, r)} |C_1^{-1}g(y)|^{p(y)} \, dy
        \geq C_1^{- p_+} \sup_{x \in \Omega, \, r > 0}
        r^{-\lambda(x)} \int_{\tilde{B}(x, r)} |g(y)|^{p(y)} \, dy.\]
    This gives us the desired inequality with $C_2=C_1^{p_+}$.
  \end{proof}

  We will need to show that with our hypotheses $\frac{\lp}{\pp}$ is log-H\"older continuous.

  \begin{lemma} \label{lemma:l/p-LH0}
  Given $\pp\in \Pp(\Omega)$ such that $p_+<\infty$ and $\pp \in LH_0(\Omega)$, suppose that $\lp : \Omega \to [0,n]$ satisfies $\lp \in LH_0(\Omega)$.  Then $\frac{\lp}{\pp}\in LH_0(\Omega)$. 
  \end{lemma}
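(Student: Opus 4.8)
The plan is to reduce the claim to the local log-H\"older continuity of $\pp$ and $\lp$ by an elementary algebraic manipulation, with no genuine analytic content. First I would put the two quotients over a common denominator and insert a cross term, writing, for $x,y\in\Omega$,
\[
\frac{\lambda(x)}{p(x)} - \frac{\lambda(y)}{p(y)}
= \frac{\lambda(x)\bigl(p(y)-p(x)\bigr) + p(x)\bigl(\lambda(x)-\lambda(y)\bigr)}{p(x)\,p(y)}.
\]
Since $p_+<\infty$, the exponent $\pp$ is real-valued and bounded, so the quotient $\lp/\pp$ is a well-defined function on $\Omega$ and every term above is finite; this is the only role played by the hypothesis $p_+<\infty$.

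Next I would take absolute values and use the uniform bounds $1\le p_-\le p(x),\,p(y)$ and $0\le\lambda(x)\le n$, which give
\[
\left|\frac{\lambda(x)}{p(x)} - \frac{\lambda(y)}{p(y)}\right|
\le n\,|p(x)-p(y)| + |\lambda(x)-\lambda(y)|.
\]
Finally, for $x,y\in\Omega$ with $|x-y|<\tfrac12$, I would apply the local log-H\"older estimates for $\pp$ and for $\lp$, with constants $C_0^{p}$ and $C_0^{\lambda}$ respectively, to bound the right-hand side by $\dfrac{nC_0^{p}+C_0^{\lambda}}{-\log|x-y|}$. This is exactly the defining inequality for membership in $LH_0(\Omega)$, so $\lp/\pp\in LH_0(\Omega)$ with log-H\"older constant $nC_0^{p}+C_0^{\lambda}$.

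I do not expect any real obstacle: the whole argument is a two-line computation. The only point that needs a moment's care is the choice of the cross term $\lambda(x)p(x)$ in the first display, which is arranged precisely so that the differences $p(y)-p(x)$ and $\lambda(x)-\lambda(y)$ appear separately and can each be controlled by the corresponding $LH_0$ hypothesis; the role of $\Omega$ being bounded is implicit only in that $\lp$ and $\pp$ are already assumed to be $LH_0$ on it.
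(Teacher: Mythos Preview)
Your proof is correct and follows essentially the same approach as the paper: the paper inserts the intermediate term $\lambda(y)/p(x)$ and applies the triangle inequality to get $\bigl|\tfrac{\lambda(x)}{p(x)}-\tfrac{\lambda(y)}{p(y)}\bigr|\le \tfrac{1}{p_-}|\lambda(x)-\lambda(y)|+\tfrac{\lambda_+}{p_-^2}|p(x)-p(y)|$, which is algebraically the same manipulation as your common-denominator-plus-cross-term identity with a slightly different choice of cross term and sharper constants. Both arguments then conclude immediately from the $LH_0$ hypotheses on $\pp$ and $\lp$.
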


  \begin{proof}
      Fix $x,\,y \in \Omega$ with $|x-y|<\frac{1}{2}$.  Then 
      \[  \left|\frac{\lambda(x)}{p(x)}-\frac{\lambda(y)}{p(y)}\right|
      \leq 
      \left|\frac{\lambda(x)}{p(x)}-\frac{\lambda(y)}{p(x)}\right| + \left|\frac{\lambda(y)}{p(x)}-\frac{\lambda(y)}{p(y)}\right|
      \leq 
      \frac{1}{p_-}|\lambda(x)-\lambda(y)| + \frac{\lambda_+}{p_-^2}|p(x)-p(y)|.  \]
      It follows immediately that $\frac{\lp}{\pp}\in LH_0(\Omega)$.
  \end{proof}
We will also need an equivalent form for the norm, which was essentially
shown in~\cite{AHS}.

\begin{lemma} \label{lemma:equiv-norm}
If $\lp, \, \pp \in LH_0(\Omega)$, and $\lambda_{+}<\infty$, $p_+<\infty$, then 
\[  \|f\|_{M_{\pp}^{\lp}(\Omega)}
  \approx
  \sup_{x \in \Omega, \, r > 0} r^{-\frac{\lambda(x)}{p(x)}} \left\|
     f \, \chi_{\tilde{B}(x, r)}
  \right\|_{\pp}. \]
\end{lemma}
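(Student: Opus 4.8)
The plan is to prove both inequalities at once by passing through modulars, using that $\Omega$ is bounded and $p_+<\infty$. Write $S(f):=\sup_{x\in\Omega,\,r>0} r^{-\lambda(x)/p(x)}\|f\chi_{\tilde{B}(x,r)}\|_{\pp}$ for the right-hand side. Since $\|\cdot\|_{M_{\pp}^{\lp}(\Omega)}$ and $S(\cdot)$ are both positively homogeneous of degree one, it suffices to show $S(f)\lesssim 1$ whenever $\|f\|_{M_{\pp}^{\lp}(\Omega)}\le 1$, and conversely $\|f\|_{M_{\pp}^{\lp}(\Omega)}\lesssim 1$ whenever $S(f)\le 1$, with implied constants depending only on $n$, $\Omega$, $p_\pm$, and the $LH_0$ constants of $\pp$ and $\lp$; the asserted equivalence then follows by the usual normalization $f\mapsto f/\|f\|_{M_{\pp}^{\lp}(\Omega)}$. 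By Lemma~\ref{lemma:modular-to-norm} the condition $\|f\|_{M_{\pp}^{\lp}(\Omega)}\lesssim 1$ is equivalent to $I_{\pp,\lp}(f)\lesssim 1$. Moreover, since $r^{-\lambda(x)/p(x)}$ is constant in $y$, for fixed $x,r$ one has $r^{-\lambda(x)/p(x)}\|f\chi_{\tilde{B}(x,r)}\|_{\pp}=\|\,r^{-\lambda(x)/p(x)}f\chi_{\tilde{B}(x,r)}\,\|_{\pp}$, which — because $p_+<\infty$ — is squeezed between fixed powers of the modular $\rho_\pp\bigl(r^{-\lambda(x)/p(x)}f\chi_{\tilde{B}(x,r)}\bigr)$ by the standard modular–norm correspondence (cf.~\cite[Section~3.4]{Cruz}); crucially, the exponents occurring in these comparisons lie in $[p_-,p_+]$, so the comparison is uniform in $x$ and $r$. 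Hence the whole statement reduces to comparing
\[ \rho_\pp\bigl(r^{-\lambda(x)/p(x)}f\chi_{\tilde{B}(x,r)}\bigr)=\int_{\tilde{B}(x,r)}|f(y)|^{p(y)}\,r^{-\lambda(x)p(y)/p(x)}\,dy \]
with $r^{-\lambda(x)}\int_{\tilde{B}(x,r)}|f(y)|^{p(y)}\,dy$, uniformly over $x,r$.

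The main obstacle, and the step I would carry out most carefully, is to show that $r^{-\lambda(x)p(y)/p(x)}\approx r^{-\lambda(x)}$ for every $y\in\tilde{B}(x,r)$, with constants independent of $x$, $r$, $y$. Here I would use that $\frac{\lp}{\pp}\in LH_0(\Omega)$ by Lemma~\ref{lemma:l/p-LH0}, so that Lemma~\ref{Lemma 4} applied to $\frac{\lp}{\pp}$ gives $r^{-\lambda(x)/p(x)}\approx r^{-\lambda(y)/p(y)}$ whenever $|x-y|\le r$; raising this to the power $p(y)\in[p_-,p_+]$ and absorbing the resulting power of the comparison constant into a constant depending only on $p_+$ gives $r^{-\lambda(x)p(y)/p(x)}\approx r^{-\lambda(y)}$. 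A second application of Lemma~\ref{Lemma 4}, now to $\lp$ itself, gives $r^{-\lambda(y)}\approx r^{-\lambda(x)}$ for $|x-y|\le r$, and chaining the two comparisons closes the claim. This is where both $p_+<\infty$ and $\lambda_+<\infty$ are genuinely used: the former keeps $p(y)$ bounded when raising the $LH_0$ inequality for $\frac{\lp}{\pp}$ to the power $p(y)$, and, together with $\lambda_+<\infty$, makes the modular–norm passage uniform.

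Once this pointwise comparison is in place, integrating it over $\tilde{B}(x,r)$ shows that $\rho_\pp\bigl(r^{-\lambda(x)/p(x)}f\chi_{\tilde{B}(x,r)}\bigr)\approx r^{-\lambda(x)}\int_{\tilde{B}(x,r)}|f(y)|^{p(y)}\,dy$ with uniform constants, so taking the supremum over $x$ and $r$ gives $\sup_{x,r}\rho_\pp\bigl(r^{-\lambda(x)/p(x)}f\chi_{\tilde{B}(x,r)}\bigr)\approx I_{\pp,\lp}(f)$. Feeding this through the modular–norm correspondence for $\|\cdot\|_\pp$ on one side and through Lemma~\ref{lemma:modular-to-norm} on the other yields $S(f)\lesssim 1\iff I_{\pp,\lp}(f)\lesssim 1\iff \|f\|_{M_{\pp}^{\lp}(\Omega)}\lesssim 1$, and the homogeneity reduction from the first paragraph upgrades this to $\|f\|_{M_{\pp}^{\lp}(\Omega)}\approx S(f)$. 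Since the argument only reassembles estimates already appearing in the proofs of Lemmas~\ref{lemma:modular-to-norm} and~\ref{lemma:l/p-LH0}, the bookkeeping of constants can largely be handled by reference to those proofs, consistent with the fact that this lemma was essentially shown in~\cite{AHS}.
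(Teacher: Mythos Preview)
Your argument is correct. The paper's proof is much terser because it outsources the modular--norm passage to \cite[Lemma~5]{AHS}, which already gives
\[
\|f\|_{M_{\pp}^{\lp}(\Omega)} \approx \sup_{x\in\Omega,\,r>0}\bigl\| r^{-\lambda(\cdot)/p(\cdot)} f\,\chi_{\tilde B(x,r)}\bigr\|_{\pp}
\]
with the exponent evaluated at the variable of integration; from there a single application of Lemma~\ref{Lemma 4} to $\lp/\pp$ (via Lemma~\ref{lemma:l/p-LH0}) replaces $r^{-\lambda(y)/p(y)}$ by the constant $r^{-\lambda(x)/p(x)}$, which then factors out of the norm. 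You instead unpack the modular comparison by hand --- effectively reproving what \cite[Lemma~5]{AHS} contains --- and need two applications of Lemma~\ref{Lemma 4} (one for $\lp/\pp$ to reach $r^{-\lambda(y)}$, then one for $\lp$ to reach $r^{-\lambda(x)}$) rather than one. The key input, namely $\lp/\pp\in LH_0(\Omega)$, is identical in both; your route is self-contained at the cost of a little extra bookkeeping, while the paper's is shorter but relies on \cite{AHS} as a black box.
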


\begin{proof}
By \cite[Lemma 5]{AHS}, we have that
\[
\|f\|_{M_{\pp}^{\lp}(\Omega)} \approx 
\sup_{x \in \Omega, \, r > 0}  \left\| r^{-\frac{\lp}{\pp}}  f \, \chi_{\tilde{B}(x, r)} \right\|_{\pp}.
\]
By Lemma~\ref{lemma:l/p-LH0}, $\frac{\lp}{\pp}\in LH_0(\Omega)$, and so by
Lemma~\ref{Lemma 4}, we have that for all $y\in \tB(x,r)$,
\[  r^{-\frac{\lambda(y)}{p(y)}} \approx
  r^{-\frac{\lambda(x)}{p(x)}}. \]
The desired equivalence follows at once.
\end{proof}

Finally, we need bounds for  maximal operators on variable exponent Morrey spaces.  
For $0\leq \sigma <n$, define 
\[
M_\sigma f(x) = \sup_{\substack{z\in \Omega \\r > 0}}  \frac{1}{|B(z, r)|^{1-\frac{\sigma}{n}}} \int_{B(z, r)} |f(y)| \, dy \cdot \chi_{B(z,r)}(x).
\]
When $\sigma=0$ this is the Hardy-Littlewood maximal operator, and when $\sigma>0$ the fractional maximal operator.
The following result was proved in \cite[Theorems~2, 3 \& Corollary~2]{AHS}.  

\begin{theorem}\label{Theorem 2}
Given a bounded, open set $\Omega\subset \mathbb{R}^n$, let $\pp \in \Pp(\Omega)$ be
such that $1 < p_{-} \leq p_{+} < \infty$ and $\pp \in
LH_0(\Omega)$.   Let
$\lp$ be a measurable function on $\Omega$ such that
$0 \leq \lambda_- \leq \lambda_{+} < n$ and
\begin{equation*}
 \sup_{x \in \Omega} [\lambda(x) + \sigma p(x)] < n.
\end{equation*}
Fix $\sigma$, $0\leq \sigma < n$, and define $\qq \in \Pp(\Omega)$ by
\[
\frac{1}{q(x)} = \frac{1}{p(x)} - \frac{\sigma}{n - \lambda(x)}.
\]
Then the maximal operator $M_\sigma$ is bounded from $M^\lp_\pp(\Omega)$ to $M^\lp_\qq(\Omega)$. The same is also true for the fractional integral operator $I_\sigma$.
\end{theorem}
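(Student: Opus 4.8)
The plan is to treat the case $\sigma=0$, the Hardy--Littlewood maximal operator $M_0$, as a foundational step and then reduce the fractional results for $\sigma>0$ to it by means of a pointwise Hedberg--Adams inequality; note that $I_\sigma$ is only meaningful for $\sigma>0$, while $M_\sigma$ with $\sigma=0$ is exactly $M_0$. For $\sigma=0$ I must show $M_0\colon M^{\lp}_{\pp}(\Omega)\to M^{\lp}_{\pp}(\Omega)$. Using the equivalent norm of Lemma~\ref{lemma:equiv-norm}, it suffices to bound $r^{-\lambda(x)/p(x)}\bigl\|M_0 f\,\chi_{\tB(x,r)}\bigr\|_{\pp}$ uniformly in $x,r$. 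For this I split $f=f\chi_{\tB(x,2r)}+f\chi_{\Omega\setminus\tB(x,2r)}$: the local piece is controlled by the known boundedness of $M_0$ on the variable Lebesgue space $L^{\pp}(\Omega)$ (valid since $\pp\in LH_0(\Omega)$, $1<p_-\le p_+<\infty$, and $\Omega$ is bounded; see~\cite{Cruz}) followed by Lemma~\ref{lemma:equiv-norm} applied to $f$, while the global piece is handled by a dyadic-annulus decomposition of the same type as the far-part estimate below.

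The heart of the argument is a pointwise estimate for $I_\sigma$ when $\sigma>0$. Fix $x$ and $\delta>0$ and split $I_\sigma f(x)$ into the integrals over $B(x,\delta)$ and over $\Omega\setminus B(x,\delta)$. Summing over the shrinking dyadic annuli $\{2^{-j-1}\delta\le|x-y|<2^{-j}\delta\}$ and comparing each average with $M_0 f(x)$ gives the near-part bound $\lesssim\delta^{\sigma}M_0 f(x)$. For the far part I decompose into the expanding annuli $\{2^{k}\delta\le|x-y|<2^{k+1}\delta\}$, estimate $|x-y|^{\sigma-n}\lesssim(2^{k}\delta)^{\sigma-n}$ on each, apply the variable H\"older inequality~\eqref{Holder} on $\tB(x,2^{k+1}\delta)$, bound $\bigl\|\chi_{\tB(x,2^{k+1}\delta)}\bigr\|_{\cpp}\lesssim(2^{k}\delta)^{n/p'(x)}$ by Lemma~\ref{Lemma 6}, and bound $\bigl\|f\chi_{\tB(x,2^{k+1}\delta)}\bigr\|_{\pp}\lesssim(2^{k}\delta)^{\lambda(x)/p(x)}\|f\|_{M^{\lp}_{\pp}(\Omega)}$ by the equivalent norm of Lemma~\ref{lemma:equiv-norm}. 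Collecting exponents, each term carries $(2^{k}\delta)$ to the power $\sigma-\frac{n-\lambda(x)}{p(x)}=:-\beta(x)$, and the hypothesis $\sup_{x}[\lambda(x)+\sigma p(x)]<n$ forces $\beta(x)\ge\beta_->0$ uniformly, so the geometric series converges and the far part is $\lesssim\delta^{-\beta(x)}\|f\|_{M^{\lp}_{\pp}(\Omega)}$. Minimizing $\delta^{\sigma}M_0 f(x)+\delta^{-\beta(x)}\|f\|_{M^{\lp}_{\pp}(\Omega)}$ over $\delta>0$ (the terms balance at $\delta^{\sigma+\beta(x)}=\|f\|/M_0f(x)$, with $\sigma+\beta(x)=\frac{n-\lambda(x)}{p(x)}$) and using $\frac{\sigma p(x)}{n-\lambda(x)}=1-\frac{p(x)}{q(x)}$ yields
\[ I_\sigma f(x)\lesssim \|f\|_{M^{\lp}_{\pp}(\Omega)}^{\,1-\frac{p(x)}{q(x)}}\,\bigl(M_0 f(x)\bigr)^{\frac{p(x)}{q(x)}}. \]

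To pass from this pointwise bound to the Morrey estimate I normalize $\|f\|_{M^{\lp}_{\pp}(\Omega)}=1$, so that $|I_\sigma f(y)|^{q(y)}\lesssim\bigl(M_0 f(y)\bigr)^{p(y)}$ pointwise. Integrating over $\tB(x,r)$, multiplying by $r^{-\lambda(x)}$, and taking the supremum shows $I_{\qq,\lp}(I_\sigma f)\lesssim I_{\pp,\lp}(M_0 f)$. By the $\sigma=0$ step, $M_0 f\in M^{\lp}_{\pp}(\Omega)$ with norm $\lesssim1$, so Lemma~\ref{lemma:modular-to-norm} gives $I_{\pp,\lp}(M_0 f)\lesssim1$; a second application of Lemma~\ref{lemma:modular-to-norm} then yields $\|I_\sigma f\|_{M^{\lp}_{\qq}(\Omega)}\lesssim1=\|f\|_{M^{\lp}_{\pp}(\Omega)}$, which is the assertion for $I_\sigma$. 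The corresponding bound for $M_\sigma$ is immediate from the standard pointwise domination $M_\sigma f(x)\lesssim I_\sigma(|f|)(x)$: for $y\in B(z,r)\ni x$ one has $B(z,r)\subseteq B(x,2r)$ and $|x-y|^{\sigma-n}\gtrsim r^{\sigma-n}$, so each fractional average is dominated by $I_\sigma(|f|)(x)$.

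I expect the main obstacle to be the far-part exponent bookkeeping together with its uniform convergence: one must verify that the $x$-dependent decay rate $\beta(x)$ is bounded below by a positive constant---this is precisely where $\sup_{x}[\lambda(x)+\sigma p(x)]<n$ enters---and that the comparison of $r^{\lambda(y)/p(y)}$ across a ball (via Lemmas~\ref{Lemma 4} and~\ref{lemma:l/p-LH0}, packaged into Lemma~\ref{lemma:equiv-norm}) does not degrade these powers. A secondary difficulty is the $\sigma=0$ foundational step, since the global part of the maximal-function estimate requires the same dyadic/H\"older machinery and must be established before the fractional case can be invoked.
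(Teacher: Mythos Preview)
The paper does not give its own proof of this theorem: it is quoted as a preliminary result from \cite[Theorems~2, 3 \& Corollary~2]{AHS}, so there is no in-paper argument to compare against. Your sketch is correct and is precisely the strategy used in \cite{AHS}: boundedness of $M_0$ on $M^{\lp}_{\pp}(\Omega)$ via a local/global split (local part by the $L^{\pp}$-boundedness of $M_0$, global part by a dyadic H\"older/Morrey estimate), then for $\sigma>0$ the Hedberg--Adams pointwise inequality
\[
I_\sigma f(x)\ \lesssim\ \bigl(M_0 f(x)\bigr)^{p(x)/q(x)}\,\|f\|_{M^{\lp}_{\pp}(\Omega)}^{\,1-p(x)/q(x)},
\]
obtained exactly as you describe (near part $\lesssim\delta^{\sigma}M_0 f(x)$, far part $\lesssim\delta^{-\beta(x)}\|f\|$ with $\beta(x)=\frac{n-\lambda(x)}{p(x)}-\sigma>0$ uniformly by the hypothesis $\sup_x[\lambda(x)+\sigma p(x)]<n$, then optimize in $\delta$), followed by the modular computation and Lemma~\ref{lemma:modular-to-norm}. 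The domination $M_\sigma f\lesssim I_\sigma(|f|)$ handles the fractional maximal operator.

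One small point to watch: you invoke Lemma~\ref{lemma:equiv-norm} to get $\|f\chi_{\tB(x,R)}\|_{\pp}\lesssim R^{\lambda(x)/p(x)}\|f\|_{M^{\lp}_{\pp}(\Omega)}$, but that lemma assumes $\lp\in LH_0(\Omega)$, whereas the theorem as stated asks only that $\lp$ be measurable. In \cite{AHS} this is handled by working with the modular-based equivalent norm $\sup_{x,r}\|r^{-\lp/\pp}f\chi_{\tB(x,r)}\|_{\pp}$ (their Lemma~5, cited in the proof of Lemma~\ref{lemma:equiv-norm} here), which does not require log-H\"older continuity of $\lp$; the same device carries your far-part estimate through under the weaker hypothesis.
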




\section{Proof of Theorem~\ref{thm:weight}}
\label{section:main-proof}

To prove this result, we make some reductions.  Fix $x_0 \in
\Omega$.  First, since the
$M_{\pp}^{\lp}(\Omega)$ norm is homogeneous, it will suffice to prove
this result for $f$ such that $\||\cdot-x_0|^b f\|_{M_{\pp}^{\lp}(\Omega)}=1$, and
so we need to show that for some $C\geq 1$, $\||\cdot-x_0|^a I_\gamma
f\|_{M_{\qq}^{\lp}(\Omega)}\leq C$.  (If $C<1$, there is nothing to prove.)
Second, since $I_\gamma$ is a positive operator, we may also assume that $f$
is nonnegative.  

Third, for each $x\in \Omega$ we  decompose
$I_\gamma f(x)$ as follows:
\begin{align*}
  I_\gamma f(x)
 &= \int_{\Omega} \frac{f(y)}{|x - y|^{n - \gamma}} \, dy \\
&= \int_{\tB(x_0,\frac{|x-x_0|}{2})} \frac{f(y)}{|x - y|^{n - \gamma}} \, dy \\
& \qquad + \int_{\tB(x_0, 2|x-x_0|)\setminus \tB(x_0,\frac{|x-x_0|}{2})} \frac{f(y)}{|x - y|^{n - \gamma}} \, dy 
\\
&  \qquad + \int_{\Omega \setminus \tB(x_0,2|x-x_0|)} \frac{f(y)}{|x - y|^{n - \gamma}} \, dy \\
&= J_1(x) + J_2(x) + J_3(x).
\end{align*}
Thus, it will suffice to show that for $1\leq i \leq 3$,
$\||\cdot-x_0|^a J_i\|_{M_{\qq}^{\lp}(\Omega)}\leq C$.

Finally, by Lemma~\ref{lemma:modular-to-norm} it will suffice to prove
that there exists $C\geq 1$ such that for every $z\in \Omega$ and
every $r>0$,
\begin{equation} \label{eqn:Ji-estimate}
 r^{-\lambda(z)}\int_{\tB(z,r)} \big[|x-x_0|^a
  J_i(x)\big]^{q(x)}\,dx \leq C. 
\end{equation}

\medskip

We estimate each of these in turn.  We first consider the $J_1$ term.
In $\tB(x_0, \frac{|x-x_0|}{2})$ we have that  \( |y-x_0| <
\frac{|x-x_0|}{2} \), which in turn implies that \( |x - y| \geq
|x-x_0| - |y-x_0| > \frac{|x-x_0|}{2} \).  Hence,
\begin{align*}
  J_1(x)
  &\leq \int_{\tB(x_0,\frac{|x-x_0|}{2})}
    \frac{f(y)}{(\frac{|x-x_0|}{2})^{n - \gamma}} \, dy  \\
  &= \frac{2^{n-\gamma}}{|x-x_0|^{n - \gamma}}
    \int_{\tB(x_0,\frac{|x-x_0|}{2})} f(y) \, dy \\
  & \lesssim {\frac{1}{|x-x_0|^{n - \gamma}} }\sum_{k=1}^{\infty}
    \int_{\tB(x_0,2^{-k}|x-x_0|)\setminus
     \tB(x_0, 2^{-k-1}|x-x_0|)} f(y) \, dy \\
  & \lesssim {\frac{1}{|x-x_0|^{n - \gamma}}} \sum_{k=1}^{\infty}
    (2^{-k}|x-x_0|)^{-b} \int_{\tB(x_0,2^{-k}|x-x_0|)} |y-x_0|^{b} {f(y)}  \, dy.
\intertext{By  H\"older’s inequality \eqref{Holder} and Lemma \ref{Lemma 6}, } %
  &\lesssim {\frac{1}{|x-x_0|^{n - \gamma}}}\sum_{k=1}^{\infty} (2^{-k}|x-x_0|)^{-b}
   \| \chi_{\tB(x_0,2^{-k}|x-x_0|)} |\cdot-x_0|^{b } f
    \|_{L^\pp(\Omega)}
   \| \chi_{B(x_0,2^{-k}|x-x_0|)} \|_{L^{p'(\cdot)}(\Omega)}
  \\
 &\lesssim {\frac{1}{|x-x_0|^{n - \gamma}}}  \sum_{k=1}^{\infty}
   (2^{-k}|x-x_0|)^{-b+\frac{n}{p'(x_0)}}
    \| \chi_{\tB(x_0,2^{-k}|x-x_0|)} |\cdot-x_0|^{b } f
    \|_{L^\pp(\Omega)}.
  \\
   \intertext{If we now apply the equivalent norm from
  Lemma~\ref{lemma:equiv-norm} and our norm assumption on $f$ we get}
  &= {\frac{1}{|x-x_0|^{n - \gamma}}} \sum_{k=1}^{\infty}
    (2^{-k}|x-x_0|)^{-b+\frac{n}{p'(x_0)}+\frac{\lambda(x_0)}{p(x_0)}}  
 \\
 & \qquad \qquad  \times
   (2^{-k}|x-x_0|)^{\frac{-\lambda(x_0)}{p(x_0)}} 
   \| \chi_{\tB(x_0,2^{-k}|x-x_0|)} |\cdot-x_0|^{b } f
    \|_{L^\pp(\Omega)}
  \\
  &\lesssim {\frac{1}{|x-x_0|^{n - \gamma}}\times} \sum_{k=1}^{\infty}
    (2^{-k}|x-x_0|)^{-b+\frac{n}{p'(x_0)}+\frac{\lambda(x_0)}{p(x_0)}}
    \||\cdot-x_0|^{b }f \|_{M_{\pp}^{\lp}(\Omega)} \\
  & = {\frac{1}{|x-x_0|^{n - \gamma}}} \sum_{k=1}^{\infty}
    (2^{-k}|x-x_0|)^{-b+\frac{n}{p'(x_0)}+\frac{\lambda(x_0)}{p(x_0)}}; \\
    \intertext{by the second inequality in \eqref{eqn:condB}, $-b+\frac{n}{p'(x_0)}\geq -b+\frac{n}{(p')_+}>0$, so the series converges and we get}
  & \lesssim {\frac{1}{|x-x_0|^{n - \gamma}}}  |x-x_0|^{-b+\frac{n}{p'(x_0)}+\frac{\lambda(x_0)}{p(x_0)}} \\
  & \lesssim  {|x-x_0|^{-b + \gamma -\frac{n-\lambda(x)}{p(x)}}}.
\end{align*}
The last inequality follows from Lemma~\ref{Lemma 4}, with $s=|x-x_0|$
and Lemma~\ref{lemma:l/p-LH0}. 
 
We can now estimate~\eqref{eqn:Ji-estimate} for $J_1$.  Fix $z\in
\Omega$ and $r>0$.  Then by the above estimate, the identity~\eqref{eqn:condD}, and Lemma~\ref{lemma:integral-bound}, we get
\begin{align*}
  r^{-\lambda(z)}\int_{\tB(z,r)} \big[|x-x_0|^a
  J_1(x)\big]^{q(x)}\,dx
  & \lesssim
     r^{-\lambda(z)}\int_{\tB(z,r)} \big[|x-x_0|^a
    |x-x_0|^{-b + {\gamma} -\frac{n-\lambda(x)}{p(x)}}\big]^{q(x)}\,dx \\
  & \leq   r^{-\lambda(z)}\int_{B(z,r)} |x-x_0|^{\lambda(x)-n}\,dx \\
  & \lesssim 1.
\end{align*}
Since the implicit constant is independent of $z$ and $r$, we have
proved~\eqref{eqn:Ji-estimate} for $J_1$.

\medskip

Next we will consider~\eqref{eqn:Ji-estimate} for $J_3$,
\[
J_3(x) = \int_{\Omega \setminus\tB(x_0,2|x-x_0|)} \frac{f(y)}{|y - x|^{n-\gamma}} \, dy.
\]
For all $y$ such that \( 2|x-x_0| \leq |y-x_0| \) we have that
\[
|y-x_0| = |y -x_0- x + x| \leq |y - x| + |x-x_0| \leq |y - x| + \frac{|y-x_0|}{2};
\]
hence,
\begin{equation} \label{eqn:J3-est}
\frac{|y-x_0|}{2} \leq |y - x|.
\end{equation}

Moreover, by the first inequality in~\eqref{eqn:condB} we have that
\[ \frac{n-\lambda(x)}{q(x)} \geq \frac{n-\lambda(x)}{q_+}  \geq \frac{n-\lambda_+}{q_+} > - a.\]
Hence,
\[
\frac{1}{q(x)} = \frac{1}{p(x)} - \frac{\gamma - b + a}{n - \lambda(x)} 
< \frac{1}{p(x)} + \frac{1}{q(x)} - \frac{\gamma - b}{n - \lambda(x)},
\]
which in turn implies  
\begin{equation}\label{lazim}
\frac{n - \lambda(x)}{p(x)} - \gamma + b > 0.  
\end{equation}

We can now argue much as we did in our estimate for $J_1$: by inequality~\eqref{eqn:J3-est}
\begin{align*}
    J_3(x)
    & \lesssim \int_{\Omega \setminus \tB(x_0,2|x-x_0|)} \frac{f(y)}{|y - x_0|^{n-\gamma}} \, dy \\
    & = \sum_{k=1}^\infty \int_{\tB(x_0,2^{k+1}|x-x_0|)\setminus \tB(x_0,2^k|x-x_0|)}
        \frac{|y-x_0|^bf(y)}{|y - x_0|^{n-\gamma+b}} \, dy \\
    & \leq \sum_{k=1}^\infty (2^k|x-x_0|)^{\gamma-n-b}
    \int_{\tB(x_0,2^{k+1}|x-x_0|)}
    |y-x_0|^b {f(y)} \, dy. \\
    \intertext{By H\"older's inequality \eqref{Holder} and Lemma~\ref{Lemma 6}, we get }
    & \leq \sum_{k=1}^\infty (2^k|x-x_0|)^{\gamma-n-b}
    \|\chi_{\tB(x_0,2^{k+1}|x-x_0|)}\|_{L^\cpp(\Omega)} \||\cdot-x_0|^b f\chi_{\tB(x_0,2^{k+1}|x-x_0|)}\|_{L^\pp(\Omega)} \\
     & \lesssim \sum_{k=1}^\infty (2^k|x-x_0|)^{\gamma-n-b+\frac{n}{p'(x_0)}+\frac{\lambda(x_0)}{p(x_0)}} \\
     & \qquad \qquad \times (2^{k+1}|x-x_0|)^{-\frac{\lambda(x_0)}{p(x_0)}}
     \||\cdot-x_0|^b f\chi_{\tB(x_0,2^{k+1}|x-x_0|)}\|_{L^\pp(\Omega)}\\
     & =  \sum_{k=1}^\infty (2^k|x-x_0|)^{\gamma-b-\frac{n-\lambda(x_0)}{p(x_0)}} (2^{k+1}|x-x_0|)^{-\frac{\lambda(x_0)}{p(x_0)}}
     \||\cdot-x_0|^b f\chi_{\tB(x_0,2^{k+1}|x-x_0|)}\|_{L^\pp(\Omega)}. \\
     \intertext{By Lemma~\ref{lemma:l/p-LH0}, $\frac{\lp}{\pp}\in LH_0(\Omega)$; hence, by Lemmas~\ref{Lemma 4} and~\ref{lemma:equiv-norm},}
     & \lesssim \sum_{k=1}^\infty (2^k|x-x_0|)^{\gamma-b-\frac{n-\lambda(x)}{p(x)}}
     \||\cdot-x_0|^b f\|_{M^\lp_\pp(\Omega)}.
     \end{align*}
     By inequality~\eqref{lazim}, the series converges, and so by our assumption on the norm of $f$ we have shown that
     \[ J_3(x)  \lesssim (|x-x_0|)^{\gamma-b-\frac{n-\lambda(x)}{p(x)}}. \]

We can now estimate \eqref{eqn:Ji-estimate} for $J_3$.  Fix $z\in \Omega$ and $r>0$; then, arguing exactly as we did for $J_1$, we have that
\begin{multline*}
    r^{-\lambda(z)}\int_{\tB(z,r)} \big[|x-x_0|^a
  J_3(x)\big]^{q(x)}\,dx \\ 
    \lesssim
     r^{-\lambda(z)}\int_{\tB(z,r)} \big[|x-x_0|^a
    |x-x_0|^{-b + n -\frac{n-\lambda(x)}{p(x)}}\big]^{q(x)}\,dx 
   \lesssim 1. 
\end{multline*} 

\medskip

Finally, we consider~\eqref{eqn:Ji-estimate} for $J_2$,
\[
J_2(x) = \int_{\tB(x_0, 2|x-x_0|)\setminus \tB(x_0,\frac{|x-x_0|}{2})} \frac{f(y)}{|x - y|^{n - \gamma}} \, dy.
\]
Temporarily, for brevity let $A=\tB(x_0, 2|x-x_0|)\setminus \tB(x_0,\frac{|x-x_0|}{2})$.  We will consider two cases.  Suppose first that $b>a$.  If $y\in A$, then 
\[ |x-y| \leq |x-x_0| + |x_0-y| \leq 3|x-x_0|.  \]
Therefore, we can cover $A$ by dyadic annuli.  More precisely,
\[ A = \bigcup_{k=0}^\infty  D_k, \]
where $D_k = \{ y \in A : 3\cdot2^{-k-1} |x-x_0| \leq |x-y| < 3 \cdot 2^{-k}|x-x_0|\}$.   
By \eqref{eqn:condA}, $\gamma \geq b-a>0$, so we can define $\sigma\geq 0$ by $\gamma=\sigma+b-a$.  We can now estimate as follows:
\begin{align*}
    J_2(x)
    & \le \sum_{k=0}^\infty \int_{D_k} \frac{f(y)}{|x - y|^{n - \gamma}} \, dy \\
    & \lesssim \sum_{k=0}^\infty \int_{D_k} (2^{-k}|x-x_0|)^{\gamma-n} f(y)\, dy \\
    & = \sum_{k=0}^\infty 2^{-k(b-a)} |x-x_0|^{-a} (2^{-k}|x-x_0|)^{\sigma-n} \int_{D_k} |x-x_0|^{b} f(y)\, dy \\
    & \lesssim \sum_{k=0}^\infty 2^{-k(b-a)} |x-x_0|^{-a} (2^{-k}|x-x_0|)^{\sigma-n} \int_{D_k} |y-x_0|^{b} f(y)\, dy. \\
    \intertext{Since $D_k \subset \tB(x,3\cdot 2^{-k}|x-x_0|)$, by the definition of the fractional maximal operator (or the maximal operator if $\sigma=0$), }
    & \lesssim \sum_{k=0}^\infty 2^{-k(b-a)} |x-x_0|^{-a} M_\sigma (|\cdot-x_0|^b f)(x) \\
    & \lesssim |x-x_0|^{-a} M_\sigma (|\cdot-x_0|^b f)(x).
\end{align*}
The last inequality holds since $b-a>0$ and so the series converges.

We can now estimate~\eqref{eqn:Ji-estimate} for $J_2$: for any $z\in \Omega$ and $r>0$,
\begin{multline*}
    r^{-\lambda(z)} \int_{\tB(z,r)} [|x-x_0|^a J_2(x)]^{q(x)} \,dx \\
  \lesssim r^{-\lambda(z)} \int_{\tB(z,r)} M_\sigma(|\cdot-x_0|^b f)(x)^{q(x)} \,dx
  \leq I_{\pp,\lp}(M_\sigma(|\cdot-x_0|^b f).  
\end{multline*} 
By our normalization assumption on $f$, $\||\cdot-x_0|^b f\|_{M^\lp_\pp(\Omega)}=1$, and so by Theorem~\ref{Theorem 2}, 
$\|M_\sigma(|\cdot-x_0|^b f)\|_{M^\lp_\qq(\Omega)}\leq C$. Therefore, by Lemma~\ref{lemma:modular-to-norm}, $I_{\pp,\lp}(M_\sigma(|\cdot-x_0|^b f)\leq C$.  This completes the estimate for $J_2$ when $b>a$.  

Now suppose that $b=a$.  Then for every $y\in A$, we have that 
\[ 1 = |x-x_0|^{b-a} =|x-x_0|^{-a} |x-x_0|^b \approx |x-x_0|^{-a}|y-x_0|^b. \]
Therefore, 
\begin{multline*} 
J_2(x) 
\lesssim |x-x_0|^{-a} \int_{A} \frac{|y-x_0|^bf(y)}{|x - y|^{n - \gamma}} \, dy
\\ \leq |x-x_0|^{-a} \int_{A} \frac{|y-x_0|^bf(y)}{|x - y|^{n - \gamma}} \, dy
\leq |x-x_0|^{-a} I_\gamma( |\cdot-x_0|^bf)(x). 
\end{multline*}
We can now repeat the above argument for~\eqref{eqn:Ji-estimate}, except that we use the bound for $I_\gamma$ in Theorem~\ref{Theorem 2} to estimate the modular.
This completes the estimate for $J_2$ and so completes the proof.

\section{Poincar\'e and Sobolev inequalities}
\label{section:applications}

In this section, as an application of Theorem \ref{thm:weight}, we show that Poincar\'e and Sobolev inequalities in the variable exponent Morrey spaces are a consequence of the Stein-Weiss inequality. Given a set 
$\Omega$ such that $0 < |\Omega| < \infty$ and a locally integrable function $f$, define
\[
f_\Omega = \frac{1}{|\Omega|} \int_\Omega f(x) \, dx.
\]

We first prove a Poincar\'e  inequality in variable exponent Morrey spaces.

\begin{theorem}\label{thm:poincare}
Let \( \Omega \) be a bounded, open, convex set containing the origin. Let  \( \pp,\, q(\cdot), r(\cdot) \in {\Pp(\Omega)} \) be such that \( \pp, r(\cdot) \in LH_{0}(\Omega) \), \( 1 < p_- \leq p_+ < \infty \), \( 1 < r_- \leq r_+ < \infty \), and \( p(x) \leq q(x)\leq r(x) \) for all \( x \in \Omega \).  Let $\lp : \Omega \to [0,n]$ be such that $0< \lambda_-\leq \lambda_+ <n$ and $\lp \in LH_0(\Omega)$.
Let \(a\) and \(b\) be constants such that conditions \eqref{eqn:condA} and \eqref{eqn:condC} hold for \(\gamma = 1\),  and \eqref{eqn:condB} and \eqref{eqn:condD} with $\gamma=1$ hold for $\pp$ and $r(\cdot)$.
Then, for all $f\in C^1(\Omega)$ there exists a constant $C>0$ such that 
\[
\| |\cdot|^{a} [f - f_\Omega] \|_{M_{q(\cdot)}^{\lp}(\Omega)} \leq C \| |\cdot|^{b} \nabla f \|_{M_{\pp}^{\lp}(\Omega)}.
\]
\end{theorem}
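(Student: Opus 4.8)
The plan is to deduce this Poincar\'e inequality from Theorem~\ref{thm:weight} with $\gamma=1$, using the classical pointwise representation of $f-f_\Omega$ in terms of $I_1(|\nabla f|)$ together with an embedding of variable exponent Morrey spaces that lets us lower the target exponent from $r(\cdot)$ — for which the balance relation \eqref{eqn:condD} is assumed — down to $q(\cdot)$. Before starting, note that both sides are homogeneous in $f$ and $f\mapsto f-f_\Omega$ is linear, and there is nothing to prove when the right-hand side is infinite; so we may assume $\||\cdot|^b\nabla f\|_{M^\lp_{\pp}(\Omega)}<\infty$. Since $\Omega$ is bounded, $M^\lp_{\pp}(\Omega)\subset L^{\pp}(\Omega)\subset L^1(\Omega)$ (test the modular on a ball $\tB(z,\rho)=\Omega$ with $z\in\Omega$, $\rho=\operatorname{diam}(\Omega)+1$); as $\nabla f$ is moreover continuous, hence locally bounded, on $\Omega$, it follows that $\nabla f\in L^1(\Omega)$, so $f_\Omega$ is well defined and $f\in W^{1,1}(\Omega)$.

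First I would recall the standard pointwise estimate on a bounded convex domain (see, e.g., \cite{CrSu} and the references therein): there is a constant $C_\Omega$, depending only on $n$, $|\Omega|$ and $\operatorname{diam}(\Omega)$, such that $|f(x)-f_\Omega|\le C_\Omega\, I_1(|\nabla f|)(x)$ for all $x\in\Omega$. Multiplying by $|x|^a\ge 0$ and using monotonicity of the modular $I_{\qq,\lp}$, hence of the $M^\lp_{\qq}$ norm, gives
\[ \big\||\cdot|^a[f-f_\Omega]\big\|_{M^\lp_{\qq}(\Omega)} \le C_\Omega\,\big\||\cdot|^a I_1(|\nabla f|)\big\|_{M^\lp_{\qq}(\Omega)}, \]
so it suffices to control the right-hand side by $\||\cdot|^b\nabla f\|_{M^\lp_{\pp}(\Omega)}$.

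The remaining ingredient is the embedding $M^\lp_{r(\cdot)}(\Omega)\hookrightarrow M^\lp_{\qq}(\Omega)$, valid because $q(x)\le r(x)$ and $\Omega$ is bounded; this step is needed precisely because $q(\cdot)$ is \emph{not} assumed to lie in $LH_0(\Omega)$, so Theorem~\ref{thm:weight} cannot be applied with target exponent $q(\cdot)$ directly. For the embedding, the elementary inequality $|g(y)|^{q(y)}\le 1+|g(y)|^{r(y)}$ gives, for all $z\in\Omega$ and $\rho>0$,
\[ \rho^{-\lambda(z)}\int_{\tB(z,\rho)}|g(y)|^{q(y)}\,dy \;\le\; \rho^{-\lambda(z)}|\tB(z,\rho)| + \rho^{-\lambda(z)}\int_{\tB(z,\rho)}|g(y)|^{r(y)}\,dy, \]
and $\rho^{-\lambda(z)}|\tB(z,\rho)|$ is bounded by an absolute constant (it is $\lesssim \rho^{n-\lambda(z)}\le 1$ for $\rho\le 1$ since $\lambda(z)<n$, and $\le \rho^{-\lambda_-}|\Omega|\le|\Omega|$ for $\rho>1$ since $\lambda_->0$). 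Hence $I_{\qq,\lp}(g)\lesssim 1+I_{r(\cdot),\lp}(g)$, and Lemma~\ref{lemma:modular-to-norm} together with a scaling argument yields $\|g\|_{M^\lp_{\qq}(\Omega)}\lesssim\|g\|_{M^\lp_{r(\cdot)}(\Omega)}$. Finally, apply Theorem~\ref{thm:weight} with $\gamma=1$, with $r(\cdot)$ in place of $q(\cdot)$, and with $|\nabla f|$ in place of $f$: by the hypotheses of the present theorem, \eqref{eqn:condA} and \eqref{eqn:condC} hold with $\gamma=1$ and \eqref{eqn:condB}, \eqref{eqn:condD} hold with $\gamma=1$ for $\pp$ and $r(\cdot)$, so Theorem~\ref{thm:weight} applies and gives $\||\cdot|^a I_1(|\nabla f|)\|_{M^\lp_{r(\cdot)}(\Omega)}\le C\||\cdot|^b\nabla f\|_{M^\lp_{\pp}(\Omega)}$; combining this with the embedding and the previous display completes the proof. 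I do not anticipate a real obstacle — the only slightly delicate points are the verification of the Morrey embedding and checking $\nabla f\in L^1(\Omega)$ so that $f_\Omega$ and the representation formula make sense; everything else is a direct appeal to a classical inequality and to Theorem~\ref{thm:weight}.
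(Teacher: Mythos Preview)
Your proof is correct and follows the same overall strategy as the paper: use the pointwise bound $|f(x)-f_\Omega|\lesssim I_1(|\nabla f|)(x)$ on a bounded convex domain, pass from the target exponent $q(\cdot)$ up to $r(\cdot)$ via an embedding $M^{\lp}_{r(\cdot)}(\Omega)\hookrightarrow M^{\lp}_{q(\cdot)}(\Omega)$, and then invoke Theorem~\ref{thm:weight} with $\gamma=1$ and target exponent $r(\cdot)$.

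The only genuine difference is in how the embedding step is carried out. The paper defines $s(\cdot)$ by $\tfrac{1}{q(x)}=\tfrac{1}{s(x)}+\tfrac{1}{r(x)}$, applies the generalized H\"older inequality~\eqref{Gholder} together with Lemma~\ref{lemma:char-estimate} to get $\|\cdot\|_{L^{q(\cdot)}}\lesssim\|\cdot\|_{L^{r(\cdot)}}$ on each $\tB(x,r)$, and then uses the equivalent-norm Lemma~\ref{lemma:equiv-norm} to translate this into a Morrey-norm inequality. You instead work directly at the modular level via $|g|^{q(y)}\le 1+|g|^{r(y)}$ and a uniform bound on $\rho^{-\lambda(z)}|\tB(z,\rho)|$, then appeal to Lemma~\ref{lemma:modular-to-norm} and homogeneity. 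Your route is a little more self-contained and, notably, never invokes Lemma~\ref{lemma:equiv-norm} for the exponent $q(\cdot)$; since that lemma formally requires $q(\cdot)\in LH_0(\Omega)$, which is \emph{not} among the hypotheses here, your argument sidesteps a small gap in the paper's presentation. The paper's H\"older-based embedding, on the other hand, generalizes more readily to other norm-based settings.
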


\begin{proof}
Fix a bounded, open, convex set $\Omega$ such that \(0 \in \Omega\). Then, for every $x \in \Omega$ and $f\in C^1(\Omega)$,
\begin{equation}\label{WPI}
|f(x) - f_{\Omega}| \leq I_1(|\nabla f| \chi_{\Omega})(x) 
\end{equation}
holds. (See \cite[Lemma 6.25]{Cruz}.)
Since \( q(x) \leq r(x) \),  we can define the exponent \( s(\cdot) \) by
\[
\frac{1}{q(x)} = \frac{1}{s(x)} + \frac{1}{r(x)}.
\]
By the generalized H\"older's inequality \eqref{Gholder}, Lemma~\ref{lemma:char-estimate},  and  inequality \eqref{WPI}, we get
\begin{align}\label{yar}
\begin{split}
\| | \cdot |^{a} [f - f_{\Omega}] \|_{L^{q(\cdot)}(\Omega)} 
&\leq K \| | \cdot |^{a} [f - f_{\Omega}] \|_{L^{r(\cdot)}(\Omega)} 
\| \chi_{\Omega} \|_{L^{s(\cdot)}(\Omega)}
\\
&\le K (|\Omega| +1 ) \| | \cdot |^{a} [f - f_{\Omega}] \|_{L^{r(\cdot)}(\Omega)}
\\
&\lesssim \| | \cdot |^{a} I_1(|\nabla f| \chi_{\Omega}) \|_{L^{r(\cdot)}(\Omega)} .
\end{split}
\end{align}

Hence, by Lemma~\ref{lemma:equiv-norm} (applied twice), inequality \eqref{yar}, and inequality~\eqref{SWI}, we have that
\begin{align*}
\| |\cdot|^{a} [f - f_\Omega] \|_{M_{q(\cdot)}^{\lambda(\cdot)}(\Omega)} 
&\lesssim  \sup_{x \in \Omega, \, r > 0}  r^{-\frac{\lambda(x)}{q(x)}}  \left\|  |\cdot|^{a} [f - f_\Omega]  \chi_{\tilde{B}(x, r)} \right\|_{L^{q(\cdot)}(\Omega)}
\\
& \lesssim   \sup_{x \in \Omega, \, r > 0}  r^{-\frac{\lambda(x)}{q(x)}}  \| | \cdot |^{a} I_1(|\nabla f| \chi_{\tilde{B}(x, r)}) \|_{L^{r(\cdot)}(\Omega)} 
\\
& \lesssim \| | \cdot |^{a} I_1(|\nabla f| \chi_{\tilde{B}(x, r)}) \|_{M_{r(\cdot)}^{\lambda(\cdot)}(\Omega)}  \\
& \lesssim \| | \cdot |^{b} \nabla f \|_{M_{p(\cdot)}^{\lambda(\cdot)}(\Omega)},
\end{align*}
which is the desired result. 
\end{proof}

We next prove a weighted Hardy-Sobolev inequality in variable exponent Morrey spaces.

\begin{theorem} \label{thm:wHSI-vem}
Let $\Omega \subset \mathbb{R}^n$  be a bounded open set containing the origin. Let \( p(\cdot), q(\cdot) \in \mathcal{P}(\Omega) \) be such that \( p(\cdot), q(\cdot) \in LH_{0}(\Omega) \), \( 1 < p_{-} \le p_{+} < \infty \), \( 1 < q_{-} \le q_{+} < \infty \), and \( p(x) \le q(x) \) for all \( x \in \Omega \). Let $\lp : \Omega \to [0,n]$ be such that $0< \lambda_-\leq \lambda_+ <n$ and $\lp \in LH_0(\Omega)$.
Let \(a\) and \(b\) be constants such that conditions \eqref{eqn:condA}, \eqref{eqn:condB}, \eqref{eqn:condC}, and \eqref{eqn:condD} hold for \(\gamma = 1\). Then, for all $f\in C_c^1(\Omega)$ there exists a constant $C>0$ such that 
\begin{equation}\label{SWI-2}
\| |\cdot|^{a}  f \|_{M_{q(\cdot)}^{\lambda(\cdot)}(\Omega)} 
\leq C \| |\cdot|^{b} \nabla  f \|_{M_{p(\cdot)}^{\lambda(\cdot)}(\Omega)}.
\end{equation}
\end{theorem}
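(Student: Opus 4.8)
The plan is to deduce \eqref{SWI-2} directly from the Stein--Weiss inequality \eqref{SWI} of Theorem~\ref{thm:weight}, in the same way that the Poincar\'e inequality of Theorem~\ref{thm:poincare} was deduced, but with a simpler pointwise representation available because $f\in C_c^1(\Omega)$ vanishes near $\partial\Omega$: no convexity of $\Omega$ and no subtraction of an average are needed. Throughout I will take $x_0=0$ in Theorem~\ref{thm:weight}, which is permitted since $0\in\Omega$, so that the weight $|\cdot-x_0|$ there becomes $|\cdot|$.

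First I would fix $f\in C_c^1(\Omega)$ and extend it by zero to an element of $C_c^1(\R^n)$, so that $\nabla f$ is bounded with compact support contained in $\Omega$. Integrating $\nabla f$ along rays emanating from a point $x$ and averaging over the unit sphere (this is classical; cf.\ \cite[Lemma~6.25]{Cruz}) yields a dimensional constant $c_n$ with
\[
|f(x)|\le c_n\int_{\R^n}\frac{|\nabla f(y)|}{|x-y|^{n-1}}\,dy
= c_n\, I_1\big(|\nabla f|\chi_\Omega\big)(x),\qquad x\in\R^n,
\]
the last equality holding because $\nabla f$ vanishes off $\Omega$. Multiplying by $|x|^a$ gives the pointwise bound $|x|^a|f(x)|\le c_n|x|^a I_1(|\nabla f|\chi_\Omega)(x)$ on $\Omega$.

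Next I would pass from this pointwise bound to the Morrey norms. The modular $I_{q(\cdot),\lambda(\cdot)}$ is monotone in $|g|$ (since $q(\cdot)\ge 1$), hence so is the norm $\|\cdot\|_{M^{\lambda(\cdot)}_{q(\cdot)}(\Omega)}$; together with its homogeneity this gives
\[
\big\||\cdot|^a f\big\|_{M^{\lambda(\cdot)}_{q(\cdot)}(\Omega)}
\le c_n\big\||\cdot|^a I_1(|\nabla f|\chi_\Omega)\big\|_{M^{\lambda(\cdot)}_{q(\cdot)}(\Omega)}.
\]
Then I would apply Theorem~\ref{thm:weight} with $\gamma=1$ and $x_0=0$ to the nonnegative function $|\nabla f|\chi_\Omega$ --- its hypotheses \eqref{eqn:condA}, \eqref{eqn:condB}, \eqref{eqn:condC}, \eqref{eqn:condD} are precisely those assumed here for $\gamma=1$ --- to obtain
\[
\big\||\cdot|^a I_1(|\nabla f|\chi_\Omega)\big\|_{M^{\lambda(\cdot)}_{q(\cdot)}(\Omega)}
\le C\big\||\cdot|^b\,|\nabla f|\chi_\Omega\big\|_{M^{\lambda(\cdot)}_{p(\cdot)}(\Omega)}
= C\big\||\cdot|^b\nabla f\big\|_{M^{\lambda(\cdot)}_{p(\cdot)}(\Omega)}.
\]
Chaining the two displays yields \eqref{SWI-2} (if the right-hand side is infinite there is nothing to prove).

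I do not expect any serious obstacle: the analytic content has already been packaged into Theorem~\ref{thm:weight}, and what remains is bookkeeping. The two points deserving a sentence of care are (i) that the zero-extension of $f$ legitimately reduces matters to the Riesz-potential representation on $\R^n$ and identifies $I_1(|\nabla f|)$ with $I_1(|\nabla f|\chi_\Omega)$, and (ii) that one may pass from a pointwise inequality between nonnegative functions to the corresponding inequality of variable exponent Morrey norms --- immediate here since the defining modular $\sup_{x,r}r^{-\lambda(x)}\int_{\tilde B(x,r)}|g(y)|^{q(y)}\,dy$ is monotone in $|g|$.
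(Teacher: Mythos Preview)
Your proposal is correct and follows essentially the same approach as the paper: the paper's proof also invokes the pointwise bound $|f(x)|\lesssim I_1(|\nabla f|)(x)$ for $f\in C_c^1(\Omega)$ (citing \cite[Lemma~6.26]{Cruz} rather than Lemma~6.25) and then applies Theorem~\ref{thm:weight} with $\gamma=1$, $x_0=0$. Your version is slightly more explicit about the zero-extension and the monotonicity of the Morrey norm, but the argument is the same.
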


\begin{proof}
For all $f\in C_c^1(\Omega)$, we have that 
\[
|f(x)| \lesssim I_1(|\nabla f|)(x)
\]
holds.  (See~\cite[Lemma~6.26]{Cruz}.)
Therefore, by inequality~\eqref{SWI},
\begin{equation*}
\| |\cdot|^{a} f \|_{M_{q(\cdot)}^{\lambda(\cdot)}(\Omega)} 
\lesssim \left\| |\cdot|^{a} I_1(|\nabla f|) \right\|_{M_{q(\cdot)}^{\lambda(\cdot)}(\Omega)} 
 \le \| |\cdot|^b \nabla f \|_{M_{p(\cdot)}^{\lambda(\cdot)}(\Omega)},
\end{equation*}
which is the desired result. 
\end{proof}

As a corollary to Theorem~\ref{thm:wHSI-vem} we prove a Gagliardo-Nirenberg inequality in variable exponent Morrey spaces.

\begin{theorem} \label{GI}
Let $\Omega \subset \mathbb{R}^n$  be a bounded open set containing the origin. Let \( p(\cdot), p^*(\cdot) \in \mathcal{P}(\Omega) \) be such that \( p(\cdot), p^*(\cdot) \in LH_{0}(\Omega) \), \( 1 < p_{-} \le p_{+} < \infty \), \( 1 < p^*_{-} \le p^*_{+} < \infty \), and \( p(x) \le p^*(x) \) for all \( x \in \Omega \). Let $\lp : \Omega \to [0,n]$ be such that $0 < \lambda_-\leq \lambda_+ <n$ and $\lp \in LH_0(\Omega)$.
Let \(a\) be a constant such that conditions \eqref{eqn:condA}, \eqref{eqn:condB}, \eqref{eqn:condC}, and \eqref{eqn:condD} hold for \(\gamma = 1\), $b=a$, and $q(\cdot)$ replaced by $p^*(\cdot)$.  Fix $\qq\in \Pp(\Omega)$ and $0\leq \theta\leq 1$, and define the exponent $r(\cdot)$ by
\[ \frac{1}{r(x)} = \frac{\theta}{p^*(x)}+\frac{1-\theta}{q(x)}.  \]
Then, for all $f\in C_c^1(\Omega)$ there exists a constant $C>0$ such that for all $f\in C_c^1(\Omega)$, 
    \[
    \| |\cdot|^{a} f \|_{M_{r(\cdot)}^{\lambda(\cdot)}(\Omega)} \leq C \| |\cdot|^{a} \nabla f \|_{M_{p(\cdot)}^{\lambda(\cdot)}(\Omega)}^{\theta} \| |\cdot|^{a} f \|_{M_{q(\cdot)}^{\lambda(\cdot)}(\Omega)}^{1-\theta}.
    \]
\end{theorem}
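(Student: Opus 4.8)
The plan is to split the argument into a purely interpolatory estimate among the three variable exponent Morrey norms, followed by an application of Theorem~\ref{thm:wHSI-vem} (the weighted Hardy--Sobolev inequality). Concretely, the main step is to prove that for \emph{every} measurable function $g$ on $\Omega$,
\[
\|g\|_{M_{r(\cdot)}^{\lambda(\cdot)}(\Omega)} \lesssim \|g\|_{M_{p^*(\cdot)}^{\lambda(\cdot)}(\Omega)}^{\theta}\,\|g\|_{M_{q(\cdot)}^{\lambda(\cdot)}(\Omega)}^{1-\theta}.
\]
Once this is available, I would apply it with $g=|\cdot|^{a}f$ and bound the first factor on the right by $\||\cdot|^{a}\nabla f\|_{M_{p(\cdot)}^{\lambda(\cdot)}(\Omega)}$ using Theorem~\ref{thm:wHSI-vem}; this is exactly the classical scheme (Hölder interpolation plus a Sobolev embedding) transported to the Morrey setting.

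To prove the interpolation inequality I would first dispose of the trivial cases: if $\theta=0$ then $r(\cdot)\equiv q(\cdot)$, if $\theta=1$ then $r(\cdot)\equiv p^*(\cdot)$, and if $A:=\|g\|_{M_{p^*(\cdot)}^{\lambda(\cdot)}(\Omega)}$ or $B:=\|g\|_{M_{q(\cdot)}^{\lambda(\cdot)}(\Omega)}$ is $0$ or $\infty$, there is nothing to prove. So assume $0<\theta<1$ and $0<A,B<\infty$, and set $\alpha(y)=\theta r(y)/p^*(y)$, $\beta(y)=(1-\theta)r(y)/q(y)$. The defining relation $\tfrac{1}{r(x)}=\tfrac{\theta}{p^*(x)}+\tfrac{1-\theta}{q(x)}$ gives $\alpha(y)+\beta(y)=1$ with $0<\alpha(y),\beta(y)<1$, and using $p^*(y)\alpha(y)=\theta r(y)$ and $q(y)\beta(y)=(1-\theta)r(y)$ one checks that for every $y$,
\begin{align*}
\left|\frac{g(y)}{A^{\theta}B^{1-\theta}}\right|^{r(y)}
&=\left(\left|\frac{g(y)}{A}\right|^{p^*(y)}\right)^{\alpha(y)}\left(\left|\frac{g(y)}{B}\right|^{q(y)}\right)^{\beta(y)} \\
&\le \left|\frac{g(y)}{A}\right|^{p^*(y)}+\left|\frac{g(y)}{B}\right|^{q(y)},
\end{align*}
the last step being the weighted arithmetic--geometric mean inequality (Young's inequality for exponents summing to $1$).

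Multiplying through by $r^{-\lambda(x)}$, integrating over $\tB(x,r)$, and taking the supremum over $x\in\Omega$, $r>0$, we obtain
\[
I_{r(\cdot),\lambda(\cdot)}\!\left(\frac{g}{A^{\theta}B^{1-\theta}}\right)\le I_{p^*(\cdot),\lambda(\cdot)}\!\left(\frac{g}{A}\right)+I_{q(\cdot),\lambda(\cdot)}\!\left(\frac{g}{B}\right)\le 2,
\]
since $\|g/A\|_{M_{p^*(\cdot)}^{\lambda(\cdot)}(\Omega)}=1$ and $\|g/B\|_{M_{q(\cdot)}^{\lambda(\cdot)}(\Omega)}=1$ force each of those Morrey modulars to be at most $1$ (the easy direction of Lemma~\ref{lemma:modular-to-norm}). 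A second application of Lemma~\ref{lemma:modular-to-norm} then yields $\|g/(A^{\theta}B^{1-\theta})\|_{M_{r(\cdot)}^{\lambda(\cdot)}(\Omega)}\le C$, i.e.\ the claimed estimate. Note that because this step works directly with the Morrey modulars rather than with the equivalent norm of Lemma~\ref{lemma:equiv-norm}, it needs no log-H\"older continuity of $q(\cdot)$ or $r(\cdot)$, which matches the hypotheses of the theorem.

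Finally, I would apply the interpolation inequality to $g=|\cdot|^{a}f$. Since conditions \eqref{eqn:condA}--\eqref{eqn:condD} are assumed to hold with $\gamma=1$, $b=a$, and $q(\cdot)$ replaced by $p^*(\cdot)$, Theorem~\ref{thm:wHSI-vem} applies and gives $\||\cdot|^{a}f\|_{M_{p^*(\cdot)}^{\lambda(\cdot)}(\Omega)}\lesssim\||\cdot|^{a}\nabla f\|_{M_{p(\cdot)}^{\lambda(\cdot)}(\Omega)}$ for all $f\in C_c^1(\Omega)$; substituting this into the first factor finishes the proof. I do not anticipate a genuine obstacle: the whole argument is essentially bookkeeping, and the only mildly delicate points are the pointwise variable-exponent Young inequality in the second paragraph and the passage between the Morrey modular and the Morrey norm under the two normalizations — both of which are routine.
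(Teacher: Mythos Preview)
Your argument is correct and follows the same overall strategy as the paper --- an interpolation step between the $M_{p^*(\cdot)}^{\lambda(\cdot)}$ and $M_{q(\cdot)}^{\lambda(\cdot)}$ norms, followed by Theorem~\ref{thm:wHSI-vem} --- but the interpolation step itself is carried out differently. The paper works at the level of the local $L^{r(\cdot)}$ norms: it applies the generalized H\"older inequality~\eqref{Gholder} together with the rescaling property of variable Lebesgue norms to factor $\||\cdot|^a f\,\chi_{\tilde B(z,r)}\|_{L^{r(\cdot)}}$ as a product, then inserts the right powers of $r^{\lambda(z)/\,\cdot}$ and passes to Morrey norms via the equivalent-norm characterization of Lemma~\ref{lemma:equiv-norm}. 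You instead work at the modular level, using the pointwise variable-exponent Young inequality and Lemma~\ref{lemma:modular-to-norm}. Your route is slightly more elementary and, as you observe, does not require $q(\cdot)$ (hence $r(\cdot)$) to lie in $LH_0(\Omega)$; this is a genuine gain, since the theorem as stated only assumes $q(\cdot)\in\Pp(\Omega)$, whereas the paper's use of Lemma~\ref{lemma:equiv-norm} for the $q(\cdot)$ and $r(\cdot)$ norms tacitly relies on log-H\"older continuity. The paper's approach, on the other hand, stays closer to the standard variable-Lebesgue toolkit and makes the interpolation inequality a direct consequence of~\eqref{Gholder}.
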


\begin{proof}
By the generalized H\"older's inequality \eqref{Gholder}, the rescaling property for variable Lebesgue space norms \cite[Proposition 2.18]{Cruz}, and Lemma~\ref{lemma:equiv-norm}, we have that for every $z\in \Omega$ and $r>0$,
\begin{align*}
\| |\cdot|^{a} &f \chi_{B(z,r)}\|_{L_{r(\cdot)}}
\\
&\le K \|\left(|\cdot|^{a}f\right)^{\theta} \chi_{B(z,r)}\|_{L_{\frac{1}{\theta}p^*(\cdot)}} \|\left(|\cdot|^{a}f\right)^{1-\theta} \chi_{B(z,r)}\|_{L_{\frac{1}{(1-\theta)}q(\cdot)}} 
\\
&=  K \| |\cdot|^{a} f \chi_{B(z,r)}\|^{\theta}_{L_{ p^*(\cdot)}} \||\cdot|^{a} f \chi_{B(z,r)}\|^{1-\theta}_{L_{q(\cdot)}}
\\
&=  K r^{\theta \frac{\lambda(z)}{p^{*}(z)}+ (1- \theta) \frac{\lambda(z)}{q(z)}} \left( r^{- \frac{\lambda(z)}{p^{*}(z)}} \| |\cdot|^{a} f \chi_{B(z,r)}\|_{L_{ p^*(\cdot)}}\right)^{\theta} \left( r^{- \frac{\lambda(z)}{q(z)}} \| |\cdot|^{a} f \chi_{B(z,r)}\|_{L_{q(\cdot)}}\right)^{1-\theta}
\\
&\lesssim  r^{ \frac{\lambda(z)}{r(z)}} \| |\cdot|^{a} f \|_{M_{p^*(\cdot)}^{\lambda(\cdot)}\Omega)}^{\theta} \||\cdot|^{a} f \|_{M_{q(\cdot)}^{\lambda(\cdot)}(\Omega)}^{1-\theta}.
\end{align*}
If we now apply Theorem~\ref{thm:wHSI-vem} with $b=a$ and $\qq$ replaced by $p^*(\cdot)$, we get
\begin{equation*}
r^{- \frac{\lambda(z)}{r(z)}}\||\cdot|^{a} f \chi_{B(z,r)}\|_{L_{r(\cdot)}}
\lesssim \| |\cdot|^{a} f \|_{M_{p^*(\cdot)}^{\lambda(\cdot)}(\Omega)}^{\theta} \| |\cdot|^{a}f \|_{M_{q(\cdot)}^{\lambda(\cdot)}(\Omega)}^{1-\theta}
\lesssim  \| |\cdot|^{a} \nabla f \|_{M_{p(\cdot)}^{\lambda(\cdot)}(\Omega)}^{\theta} \| |\cdot|^{a} f \|_{M_{q(\cdot)}^{\lambda(\cdot)}(\Omega)}^{1-\theta}.
\end{equation*}
Since the constant is independent of $z$ and $r$, by Lemma~\ref{lemma:equiv-norm} we get the desired result.
\end{proof}

Finally, we prove a fractional Hardy-Sobolev inequality in variable exponent  Morrey spaces. Before stating the theorem, we recall the definition of the fractional Laplacian. 
For a sufficiently smooth function $f$, the operator $(-\Delta)^s$ can be defined in several equivalent ways. 
One common definition is the integral (or pointwise) form:
\[
(-\Delta)^s f(x) = c \, \mathrm{p.v.} \int_{\mathbb{R}^n} \frac{f(x) - f(y)}{|x-y|^{n+2s}} \, dy,
\]
where p.v. denotes the principal value and $c=c(n,s)$ is a normalization constant.  See~\cite{MR2944369} for further information.

\begin{theorem} \label{thm:fhs}
Let $\Omega \subset \mathbb{R}^n$ be a bounded open set containing the origin. Let \(s \in (0,1) \) and \( p(\cdot), q(\cdot) \in \mathcal{P}(\Omega) \). Assume that \( p(\cdot), q(\cdot) \in LH_{0}(\Omega) \), \( 1 < p_{-} \le p_{+} < \infty \), \( 1 < q_{-} \le q_{+} < \infty \), and \( p(x) \le q(x) \) for all \( x \in \Omega \).  Let $\lp : \Omega \to [0,n]$ be such that $0 < \lambda_-\leq \lambda_+ <n$ and $\lp \in LH_0(\Omega)$.   Let \(a\) and \(b\) be constants such that \eqref{eqn:condA}, \eqref{eqn:condB}, \eqref{eqn:condC}, and \eqref{eqn:condD} hold with $\gamma=2s$.
Then there exists a constant $C>0$ such that for all function $f\in C^2(\Omega)$, 
\begin{equation*}
\| |\cdot|^{a}  f \|_{M_{q(\cdot)}^{\lambda(\cdot)}(\Omega)} 
\leq C \| |\cdot|^{b} \left(-\Delta\right)^{s} f \|_{M_{p(\cdot)}^{\lambda(\cdot)}(\Omega)}.
\end{equation*}
\end{theorem}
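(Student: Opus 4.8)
The plan is to deduce Theorem~\ref{thm:fhs} from the Stein-Weiss inequality~\eqref{SWI} in exactly the way Theorem~\ref{thm:wHSI-vem} was obtained, replacing the first-order subrepresentation $|f(x)|\lesssim I_1(|\nabla f|)(x)$ by a fractional one. First observe that if $s=0$, then $\gamma=2s=0$, so condition~\eqref{eqn:condA} forces $b=a$ and condition~\eqref{eqn:condD} forces $p(\cdot)=q(\cdot)$; since $(-\Delta)^0f=f$, the inequality is trivial in this case. Hence we may assume $0<s\le 1$, so $0<2s\le 2$, and since we will apply Theorem~\ref{thm:weight} with $\gamma=2s$, we are implicitly in the regime $0<2s<n$.

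The key input is the classical Riesz representation formula: for $f$ in a suitable class (say $f\in C_c^\infty(\R^n)$ with $\operatorname{supp}f\subset\Omega$, or more generally any $f$ for which the identity below holds), there is a dimensional constant $c_{n,s}>0$ such that
\[ f(x)=c_{n,s}\,I_{2s}\big((-\Delta)^sf\big)(x), \qquad x\in\R^n. \]
For $s=1$ this is the Newtonian potential identity $f=c_nI_2(-\Delta f)$, the second-order analogue of \cite[Lemma~6.26]{Cruz}; for $0<s<1$ it is the standard fact that $I_{2s}$ inverts $(-\Delta)^s$ on such functions. Since $(-\Delta)^sf$ is supported in $\Omega$, this yields the pointwise bound
\[ |f(x)|\lesssim I_{2s}\big(|(-\Delta)^sf|\,\chi_\Omega\big)(x), \qquad x\in\Omega. \]

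With $\gamma=2s$, the hypotheses imposed on $p(\cdot)$, $q(\cdot)$, $\lambda(\cdot)$ and on the constants $a,b$ are precisely those of Theorem~\ref{thm:weight}. Applying~\eqref{SWI} with $x_0=0\in\Omega$ to the nonnegative function $g=|(-\Delta)^sf|\,\chi_\Omega$, and using the monotonicity of the Morrey norm together with the pointwise bound above, we obtain
\[ \||\cdot|^af\|_{M_{\qq}^{\lp}(\Omega)} \lesssim \big\||\cdot|^aI_{2s}(|(-\Delta)^sf|\,\chi_\Omega)\big\|_{M_{\qq}^{\lp}(\Omega)} \lesssim \big\||\cdot|^b(-\Delta)^sf\big\|_{M_{\pp}^{\lp}(\Omega)}, \]
which is the claimed inequality.

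The main obstacle is justifying the pointwise subrepresentation for the regularity class named in the statement: for $s=1$ and compactly supported $f$ it is classical, but for $0<s<1$ the operator $(-\Delta)^s$ is nonlocal, so ``$f\in C^2(\Omega)$'' should be read as the class on which $f=c_{n,s}I_{2s}((-\Delta)^sf)$ is valid (for instance $C_c^\infty(\R^n)$, or functions in an appropriate fractional Sobolev space supported in $\Omega$). Once this identity is available, the remainder of the argument is a verbatim repetition of the proof of Theorem~\ref{thm:wHSI-vem}.
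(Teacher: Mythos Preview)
Your proposal is correct and follows essentially the same approach as the paper: use the identity $f=c_{n,s}I_{2s}\big((-\Delta)^sf\big)$ and then invoke Theorem~\ref{thm:weight} with $\gamma=2s$ and $x_0=0$. You are in fact more careful than the paper, which simply states the Fourier-analytic identity and applies the Stein-Weiss inequality without discussing the $s=0$ degeneracy, the nonlocality of $(-\Delta)^s$, or the issue of whether $(-\Delta)^sf$ is supported in $\Omega$; your closing caveat about the intended function class is well taken and arguably sharper than the paper's own treatment.
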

\begin{proof}
By the definition of the fractional Laplacian and the properties of the Fourier transform, it follows that for \( s \in [0, 1] \),
\[
I_{2s}\big((-\Delta)^s f\big)(x) = f(x).
\]
The  desired result follows now follows immediately from Theorem~\ref{thm:weight}.
\end{proof}

\begin{remark} 
In Theorem~\ref{thm:fhs}, if fix $0\leq s \leq 1$, $\pp$ such that $p_+<\frac{n-\lambda_+}{2s}$, and let $\qq=\pp$,  $b = 0$ and $a=-2s$, we get a fractional Hardy-Rellich inequality,
\begin{equation*}
\| |\cdot|^{-2s}  f \|_{M_{p(\cdot)}^{\lambda(\cdot)}(\Omega)} \leq C \| \left(-\Delta\right)^{s} f \|_{M_{p(\cdot)}^{\lambda(\cdot)}(\Omega)}.
\end{equation*}
\end{remark}

\bibliographystyle{amsplain}

\bibliography{VEMS}

\end{document}